\documentclass[preprint,11pt,3p]{elsarticle}
\usepackage{amsmath,amsthm,amsfonts,amssymb,arydshln}
\usepackage{xcolor}
\usepackage{algorithm}
\usepackage{multirow}
\usepackage{mathdots}
\usepackage{multicol,graphicx}
\usepackage{mathrsfs}
\usepackage{booktabs}

\usepackage{comment}
\usepackage{subcaption}
\usepackage{placeins}
\usepackage{threeparttable}
\usepackage{lipsum}
\usepackage{graphicx}
\usepackage{algpseudocode}
\usepackage{tikz-cd}
\algnewcommand{\Input}{\item[\textbf{Input:}]}
\algnewcommand{\Output}{\item[\textbf{Output:}]}

\usepackage{mathrsfs}
\usepackage{sectsty}
\definecolor{astral}{RGB}{46,116,181}
\sectionfont{\color{astral}}
\newtheorem{theorem}{Theorem}[section]
\newtheorem{lemma}[theorem]{Lemma}

\newtheorem{definition}[theorem]{Definition}
\newtheorem{example}[theorem]{Example}
\newtheorem{remark}[theorem]{Remark}
\usepackage{hyperref}
\usepackage{scalerel}
\usepackage{stackengine,wasysym}

\usepackage{tikz}
\pgfdeclarelayer{bg} 
\pgfsetlayers{bg,main}

\usepackage{tikz,xcolor}
\usetikzlibrary{positioning}
\definecolor{lime}{HTML}{A6CE39}
\definecolor{lightblue}{rgb}{0.0, 0.0, 0.5}
\DeclareRobustCommand{\orcidicon}{%
	\begin{tikzpicture}
	\draw[lime, fill=lime] (0,0)
	circle [radius=0.16]
	node[white] {{\fontfamily{qag}\selectfont \tiny ID}};
	\draw[white, fill=white] (-0.0625,0.095)
	circle [radius=0.007];
	\end{tikzpicture}
	\hspace{-2mm}
}
\foreach \x in {A, ..., Z}{%
	\expandafter\xdef\csname orcid\x\endcsname{\noexpand\href{https://orcid.org/\csname orcidauthor\x\endcsname}{\noexpand\orcidicon}}
}

\begin{document}
\begin{frontmatter}
\title{
A Fast Algorithm for Quaternion Tensor Singular Value Decomposition under the T-product}

\author{Daochang Zhang$^a$, Yue Zhao$^{b,*}$, Jingqian Li$^c$}
\address{
$^{a}$ College of Mathematics, Changchun Normal University, Changchun 130032, P.R. China.
\\\textit{E-mail}: \texttt{daochangzhang@126.com}\\

$^{b}$ Department of Math \& Stat, Georgia State University, Atlanta, GA 30303, USA.
\\\textit{E-mail}: \texttt{yuezhao0303@163.com}\\

$^{c}$ School of Sciences, Tianjin University of Technology, Tianjin, P.R. China.
\\\textit{E-mail}: \texttt{JingqianLi85@163.com}
}
\begin{abstract}
Quaternion tensor singular value decomposition (T-SVD) under the T-product provides a framework for analyzing multidimensional quaternion data. However, the T-SVD construction of Pan and Ng (SIAM J. Matrix Anal. Appl., 2024) \cite{PanNg2024} contains a structural gap: an unrestricted quaternion matrix SVD need not preserve the paired Fourier block structure required for tensor factor recovery, potentially leading to loss of T-unitarity and inaccurate reconstruction. 

To address this issue, we develop a structure-preserving construction that transforms each paired quaternion block into a complex matrix and maps its complex SVD factors back to quaternion factors with the required structure. This construction corrects the proof of \cite[Theorem 4]{PanNg2024} and leads to a corrected version of \cite[Algorithm 2]{PanNg2024}, hereafter referred to as the ``corrected algorithm'', ensuring T-unitary tensor factors and an f-diagonal core for arbitrary third-order quaternion tensors. A numerical counterexample demonstrates the failure of \cite[Algorithm 2]{PanNg2024} and verifies the corrected algorithm at machine precision. 

Notably, we propose a fast structure-preserving algorithm,  Algorithm~\ref{alg:t-fast}, and compare it numerically with the corrected algorithm. The experimental results show that Algorithm~\ref{alg:t-fast} reduces the decomposition time while maintaining near-machine-precision accuracy. Furthermore, its application to color video encryption yields high reconstruction quality, favorable ciphertext statistics, and stability under input perturbations.
\end{abstract}

\begin{keyword}
Quaternion tensor T-SVD\sep T-product\sep Fast structure-preserving algorithm\sep Color video processing
\MSC[2010] 15A69; 15A18; 65F30; 94A08
\end{keyword}

\end{frontmatter}

\section{Introduction}
Quaternion algebra provides a unified representation of multichannel data and has been widely used in signal processing, computer vision and facial recognition
~\cite{CHOU,SIAMMKB2013,KUNZE,MFBC2023,SHOE}. In particular, the three imaginary components of a quaternion can jointly encode the RGB channels of a color image. Quaternion tensors extend this representation to multidimensional data, allowing spatial and temporal structure to be retained without unfolding the data into matrices. These features motivate the development of quaternion tensor decompositions \cite{SIAMRevKB2009, LiuAMC2026, SIAMSMS2013, SIAMSL2018}.

The tensor singular value decomposition based on the T-product extends matrix SVD to third-order tensors through block-circulant representations~\cite{LAAKM2011}. Its extension to quaternion tensors is complicated by the noncommutativity of quaternion multiplication. Zheng et al.~\cite{ZMMAMLD} established a quaternion tensor SVD under the T-product for tensors symmetric in the third mode by using octonions. Pan and Ng~\cite{PanNg2024} developed a block diagonalization of quaternion circulant matrices and proposed a corresponding quaternion
tensor SVD and computational algorithm.

However, the proof of the quaternion tensor SVD in
\cite[Theorem~4]{PanNg2024} does not ensure that the computed singular-vector factors preserve the paired Fourier-block structure required for tensor recovery. In particular, the blocks associated with the paired frequencies $t$ and $p(t)$ satisfy specific algebraic relations between their complex and complementary quaternion
components. An unrestricted quaternion matrix SVD need not preserve these relations. Consequently, although the resulting factors form a valid matrix SVD, their inverse Fourier transforms need not be block-circulant and therefore cannot, in general, be identified with quaternion tensor factors. The same structural issue carries over to
the corresponding computational algorithm.

The remainder of this paper is organized as follows. Section~2 reviews the necessary preliminaries on quaternion tensors, the T-product, and the Fourier representation of quaternion block-circulant matrices, and establishes several auxiliary lemmas needed
in the subsequent analysis. Section~3 revisits the quaternion T-SVD. We identify the paired
Fourier-block structure required for tensor recovery, derive a corrected T-SVD that preserves this structure, and present a structure-preserving algorithm via the block-circulant matrix and a fast structure-preserving algorithm. Section~4 provides two numerical examples. The first gives a counterexample to the unrestricted construction and verifies the
corrected T-SVD through reconstruction and T-unitarity errors. The second compares Algorithm~\ref{alg:t-fast} and the corrected algorithm on random quaternion tensors in terms of computational time and reconstruction accuracy. Section~5 applies the proposed method to color video encryption and recovery and evaluates its reconstruction quality, ciphertext
statistics, differential sensitivity, and stability under input perturbations.

\section{Preliminaries}\label{sec:preliminaries}

Let $\mathbb R$, $\mathbb C$, and $\mathbb H$ denote the real field,
the complex field, and the quaternion algebra, respectively.
A quaternion is written as $q=q_0+q_1\mathbf i+q_2\mathbf j+q_3\mathbf k,~q_0,q_1,q_2,q_3\in\mathbb R,$
where $\mathbf i^2=\mathbf j^2=\mathbf k^2
=\mathbf i\mathbf j\mathbf k=-1.
$ Its conjugate and modulus are defined by $\overline q
=q_0-q_1\mathbf i-q_2\mathbf j-q_3\mathbf k,$ and $|q|=(q\overline q)^{1/2}.$ A quaternion is called pure if $q_0=0$ and unit if $|q|=1$. Orthogonality of pure quaternions refers to the Euclidean inner product of their imaginary coefficient vectors.
For a quaternion matrix $Z$, its conjugate transpose is denoted by
$Z^*=\overline Z^{\,T},$ where the conjugation is taken entrywise.

For the subsequent developments, let $\breve\mu$ and $\breve\alpha$
be orthogonal unit pure quaternions, and set $\breve\beta=\breve\mu\breve\alpha.$ Then $\{1,\breve\mu,\breve\alpha,\breve\beta\}$ forms an orthonormal basis of $\mathbb H$ over $\mathbb R$, and $$
\breve\mu^2=\breve\alpha^2=\breve\beta^2=-1,~~
\breve\mu\breve\alpha
=-\breve\alpha\breve\mu
=\breve\beta,~~
\breve\alpha\breve\beta
=-\breve\beta\breve\alpha
=\breve\mu,~~
\breve\beta\breve\mu
=-\breve\mu\breve\beta
=\breve\alpha.$$
The standard basis $\{1,\mathbf i,\mathbf j,\mathbf k\}$ is recovered by taking
$\breve\mu=\mathbf i,~
\breve\alpha=\mathbf j,~
\breve\beta=\mathbf k.$ Associated with $\breve\mu$, define the complex subfield $\mathbb C_{\breve\mu}
=\{x+y\breve\mu:x,y\in\mathbb R\},~
\mathbb C_{\breve\mu}\breve\alpha
=
\{z\breve\alpha:z\in\mathbb C_{\breve\mu}\},
$ then
$\mathbb H=\mathbb C_{\breve\mu}
\oplus
\mathbb C_{\breve\mu}\breve\alpha.$
Hence, every quaternion $q\in\mathbb H$ admits a unique representation $q=z+w\breve\alpha,~z,w\in\mathbb C_{\breve\mu}.$
For $z=x+y\breve\mu\in\mathbb C_{\breve\mu}$, its conjugate is
$\overline z=x-y\breve\mu.$  Moreover, $\breve\alpha z=\overline z\,\breve\alpha,~z\breve\alpha=\breve\alpha\,\overline z,~
z\in\mathbb C_{\breve\mu}.$ These relations follow from the quaternion basis representation and the commutation relations in~\cite{PanNg2024}.

Throughout the paper,
$\mathcal A=(a_{ijk})\in
\mathbb H^{n_1\times n_2\times n_3}$
denotes a third-order quaternion tensor. With respect to the basis
$\{1,\breve\mu,\breve\alpha,\breve\beta\}$, the Cayley--Dickson representation of $\mathcal A$ is written as $\mathcal A=\mathcal A_{\mathbf d}+\mathcal A_{\mathbf c}\breve\alpha,~\mathcal A_{\mathbf d},
\mathcal A_{\mathbf c}\in
\mathbb C_{\breve\mu}^{n_1\times n_2\times n_3}.$ We use MATLAB-style indexing: $\mathcal A(i,:,:)$, $\mathcal A(:,j,:)$, and $\mathcal A(:,:,k)$ denote its $i$-th horizontal, $j$-th lateral, and $k$-th frontal slices, respectively. For conciseness, the $k$-th frontal slice is denoted by $\mathcal A^{(k)}:=\mathcal A(:,:,k).
$ The Frobenius norm of $\mathcal A$ is defined by
$\|\mathcal A\|_F=\left(
\sum_{i=1}^{n_1}
\sum_{j=1}^{n_2}
\sum_{k=1}^{n_3}
|a_{ijk}|^2
\right)^{1/2}.$

The operator $\mathtt{unfold}$ stacks the frontal slices of tensor $\mathcal A$ vertically, and $\mathtt{fold}$ denotes its inverse. Thus, $\mathtt{fold}
\bigl(\mathtt{unfold}(\mathcal A)\bigr)=\mathcal A.$ The matrix $\operatorname{bcirc}(\mathcal A)\in\mathbb H^{am\times bm}$ is the block-circulant matrix whose first block column is
$\mathtt{unfold}(\mathcal A)$, with each subsequent block column obtained by a downward cyclic shift of the preceding one \cite{HeAMM}. Since its first block column uniquely determines $\mathcal A$, the mapping $\operatorname{bcirc}$ is injective. 

The quaternion discrete Fourier transform (QDFT) provides a convenient frequency-domain representation of quaternion block-circulant matrices. For a positive integer $m$, define the
normalized discrete quaternion Fourier matrix $\breve F_{\breve\mu}
\in\mathbb C_{\breve\mu}^{m\times m}$
by
$
(\breve F_{\breve\mu})_{rs}=
\frac{1}{\sqrt m}
\exp\left(
-\frac{2\pi\breve\mu}{m}(r-1)(s-1)
\right),~1\le r,~s\le m.
$
This is the equivalent $1$-based indexing form of the quaternion
Fourier matrix used in \cite{PanNg2024}. It is unitary and satisfies
$\breve F_{\breve\mu}^*
\breve F_{\breve\mu}
=
I_{n_3},~
\breve F_{\breve\mu}^{\,2}
=
P_{n_3},~
\breve\alpha\breve F_{\breve\mu}^*
=
\breve F_{\breve\mu}\breve\alpha$, where $P_{n_3}$ is defined by
\begin{align}\label{permutation}
P_{ij}=
\begin{cases}
1, & (i,j)=(1,1)\ \text{or}\ i+j=n_3+2,\quad 2\le i,j\le n_3,\\
0, & \text{otherwise}.
\end{cases}
\end{align}

We begin by recalling several basic definitions from
\cite{ZMMAMLD} that will be used throughout the paper.

\begin{definition}[T-product]\label{def:t-product}
Let
$\mathcal A\in\mathbb H^{n_1\times n_2\times n_3}$
and
$\mathcal B\in\mathbb H^{n_2\times n_4\times n_3}$.
The T-product of $\mathcal A$ and $\mathcal B$ is defined by
$\mathcal A\star\mathcal B
:=
\operatorname{fold}\big(
\operatorname{bcirc}(\mathcal A)
\operatorname{unfold}(\mathcal B)
\big)
\in\mathbb H^{n_1\times n_4\times n_3}.$
\end{definition}

\begin{definition}[Conjugate transpose]
\label{def:conjugate-transpose}
Let
$\mathcal A\in\mathbb H^{n_1\times n_2\times n_3}$.
The conjugate transpose of $\mathcal A$, denoted by $\mathcal A^*$,
is obtained by conjugate transposing each frontal slice of
$\mathcal A$ and then reversing the order of the resulting frontal
slices from $2$ to $n_3$.
\end{definition}

\begin{definition}[Identity tensor ]\label{def:identity-tensor}
The identity tensor
$\mathcal I\in\mathbb H^{n_1\times n_1\times n_3}$
has first frontal slice $I_{n_1}$ and all remaining frontal slices
equal to zero.
\end{definition}

\begin{definition}[T-unitary]\label{def:t-unitary}
A tensor
$\mathcal U\in\mathbb H^{n_1\times n_1\times n_3}$
is called T-unitary if
        $\mathcal U^*\star\mathcal U=\mathcal U\star\mathcal U^*
=\mathcal I.$
\end{definition}

\begin{definition}[F-diagonal tensor]\label{def:f-diagonal}
A tensor
$\mathcal S\in\mathbb H^{n_1\times n_2\times n_3}$
is called f-diagonal if every frontal slice
$\mathcal S^{(k)}$, $1\le k\le n_3$, is a diagonal matrix.
\end{definition}


\section{Quaternion T-SVD and Its Computational Algorithm}
\subsection{Quaternion T-SVD}
To establish the T-SVD, we first characterize the paired block structure that underlies the Fourier-domain construction. By combining
\cite[Theorem~3]{PanNg2024} with the Cayley--Dickson decomposition, we obtain the following result.
\begin{lemma}\label{thm:bcirc-fourier-equivalence}
Let $\mathcal X=\mathcal X_d+\mathcal X_c\breve\alpha
\in\mathbb H^{a\times b\times m}$ and $\mathcal X_d,\mathcal X_c
\in\mathbb C_{\breve\mu}^{a\times b\times m}$.
For $t=1,\ldots,m$, define the Fourier coefficients of $\mathcal X_d$ and $\mathcal X_c$ by
\small{\begin{align}\label{ddtt}
\breve D_{1,t}
=
\sum_{\ell=1}^{m}\exp\left(
-\frac{2\pi\breve\mu}{m}(t-1)(\ell-1)
\right)
\mathcal X_d^{(\ell)},\qquad
\breve D_{2,t}
=
\sum_{\ell=1}^{m}
\exp\left(
-\frac{2\pi\breve\mu}{m}(t-1)(\ell-1)
\right)
\mathcal X_c^{(\ell)}
\end{align}\small}
and $\breve D_{1,t},\breve D_{2,t}\in
\mathbb C_{\breve\mu}^{a\times b}$. Then the Fourier-domain representation associated with a block-circulant matrix of $\mathcal X$ is as follows
\begin{align}\label{equal}
(\breve F_{\breve\mu}\otimes I_a)
\operatorname{bcirc}(\mathcal X)
(\breve F_{\breve\mu}^{*}\otimes I_b)
=\breve D,\end{align} where
$\breve D=\operatorname{diag}
(\breve D_{1,1},\ldots,\breve D_{1,m})
+
\operatorname{diag}
(\breve D_{2,1},\ldots,\breve D_{2,m})
(P_m\otimes I_b)\breve\alpha.
$
\end{lemma}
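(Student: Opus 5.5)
The plan is to split $\operatorname{bcirc}(\mathcal X)$ along the Cayley--Dickson decomposition and treat each part with the classical complex block-circulant diagonalization, moving $\breve\alpha$ to the right using $\breve\alpha\breve F_{\breve\mu}^*=\breve F_{\breve\mu}\breve\alpha$.

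First, $\operatorname{bcirc}$ is built block by block from the frontal slices, so it is additive. It also commutes with right multiplication by the scalar $\breve\alpha$. Hence
\[
\operatorname{bcirc}(\mathcal X)=\operatorname{bcirc}(\mathcal X_d)+\operatorname{bcirc}(\mathcal X_c)\,\breve\alpha ,
\]
where $\operatorname{bcirc}(\mathcal X_d)$ and $\operatorname{bcirc}(\mathcal X_c)$ have all entries in $\mathbb C_{\breve\mu}$. Since $\breve F_{\breve\mu}$ also has entries in the commutative field $\mathbb C_{\breve\mu}\cong\mathbb C$, the standard result for complex block-circulant matrices applies verbatim. (This is also what \cite[Theorem~3]{PanNg2024} supplies.) It gives
\[
(\breve F_{\breve\mu}\otimes I_a)\operatorname{bcirc}(\mathcal X_d)(\breve F_{\breve\mu}^*\otimes I_b)=\operatorname{diag}(\breve D_{1,1},\ldots,\breve D_{1,m}),
\]
and the analogous identity for $\mathcal X_c$ with blocks $\breve D_{2,t}$, both with the Fourier coefficients defined in \eqref{ddtt}. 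For the record, I would check the normalization and sign convention once. Expand the $(t,t)$ block as $\frac1m\sum_{r,s}\omega^{(t-1)(r-1)}\mathcal X^{(r-s+1 \bmod m)}\omega^{-(s-1)(t-1)}$ with $\omega=\exp(-2\pi\breve\mu/m)$, re-index by $\ell=r-s+1$, and sum out the free index. This yields exactly $\sum_\ell\omega^{(t-1)(\ell-1)}\mathcal X^{(\ell)}$, and the off-diagonal blocks vanish by orthogonality of characters.

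The key step is the second term:
\[
(\breve F_{\breve\mu}\otimes I_a)\operatorname{bcirc}(\mathcal X_c)\,\breve\alpha\,(\breve F_{\breve\mu}^*\otimes I_b).
\]
Here I would use $\breve\alpha z=\overline z\breve\alpha$ for $z\in\mathbb C_{\breve\mu}$ entrywise. This gives $\breve\alpha(\breve F_{\breve\mu}^*\otimes I_b)=(\overline{\breve F_{\breve\mu}^*}\otimes I_b)\breve\alpha=(\breve F_{\breve\mu}\otimes I_b)\breve\alpha$; the last equality holds because $\breve F_{\breve\mu}$ is symmetric, and it is the stated identity $\breve\alpha\breve F_{\breve\mu}^*=\breve F_{\breve\mu}\breve\alpha$. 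Next I insert $I=(\breve F_{\breve\mu}^*\otimes I_b)(\breve F_{\breve\mu}\otimes I_b)$ by unitarity and use $\breve F_{\breve\mu}^2=P_m$. The second term then becomes
\[
\operatorname{diag}(\breve D_{2,1},\ldots,\breve D_{2,m})\,(P_m\otimes I_b)\,\breve\alpha .
\]
Adding the two pieces gives \eqref{equal}.

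The main obstacle is purely bookkeeping under noncommutativity. One must keep $\breve\alpha$ strictly on the right throughout, and only ever commute it past matrices with entries in $\mathbb C_{\breve\mu}$, each time conjugating them. One must also verify that $\breve F_{\breve\mu}^2$ is exactly the permutation \eqref{permutation}, namely $1\mapsto1$ and $t\mapsto m+2-t$ (which follows from $\sum_s\omega^{(r-1+t-1)(s-1)}=m\,\delta_{r+t\equiv2}$), and that the Kronecker factor is $I_b$ (column side) rather than $I_a$. I would double-check these against a small case such as $m=2$ or $3$.
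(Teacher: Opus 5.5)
Your proposal is correct and follows the paper's proof essentially step by step. You split $\operatorname{bcirc}(\mathcal X)=\operatorname{bcirc}(\mathcal X_d)+\operatorname{bcirc}(\mathcal X_c)\breve\alpha$, diagonalize each $\mathbb C_{\breve\mu}$ part by the standard block-circulant result, and move $\breve\alpha$ to the right via $\breve\alpha\breve F_{\breve\mu}^*=\breve F_{\breve\mu}\breve\alpha$; your insertion of $(\breve F_{\breve\mu}^*\otimes I_b)(\breve F_{\breve\mu}\otimes I_b)$ together with $\breve F_{\breve\mu}^2=P_m$ is exactly the paper's identity $\breve F_{\breve\mu}=\breve F_{\breve\mu}^*P_m$, and your extra checks of the normalization in \eqref{ddtt} and of $\breve F_{\breve\mu}^2=P_m$, which the paper takes for granted, go through as you describe.
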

\begin{proof}
Since  $
\mathcal X=\mathcal X_d+\mathcal X_c\breve\alpha$, and $
\mathcal X_d,\mathcal X_c\in\mathbb C_{\breve\mu}^{a\times b\times m}.$ Then
\[
\operatorname{bcirc}(\mathcal X)
=
\operatorname{bcirc}(\mathcal X_d)
+
\operatorname{bcirc}(\mathcal X_c)\breve\alpha.
\]
The Fourier diagonalization of block-circulant matrices and
the definitions in~\eqref{ddtt}, yields
\begin{align}\label{xd}
&(\breve F_{\breve\mu}\otimes I_a)
\operatorname{bcirc}(\mathcal X_d)
(\breve F_{\breve\mu}^{*}\otimes I_b)
=
\operatorname{diag}
(\breve D_{1,1},\ldots,\breve D_{1,m}),\\\notag
&(\breve F_{\breve\mu}\otimes I_a)
\operatorname{bcirc}(\mathcal X_c)
(\breve F_{\breve\mu}^{*}\otimes I_b)
=
\operatorname{diag}
(\breve D_{2,1},\ldots,\breve D_{2,m}).
\end{align}
Moreover,
\[\breve F_\mu^*\breve F_\mu=I_m,\quad
\breve F_\mu^2=P_m,\quad
\breve\alpha\breve F_{\breve\mu}^{*}
=
\breve F_{\breve\mu}\breve\alpha,
\quad
\breve F_{\breve\mu}
=
\breve F_{\breve\mu}^{*}P_m.
\]
It follows that
\[
\begin{aligned}
(\breve F_{\breve\mu}\otimes I_a)
\operatorname{bcirc}(\mathcal X_c)\breve\alpha
(\breve F_{\breve\mu}^{*}\otimes I_b)
&=
(\breve F_{\breve\mu}\otimes I_a)
\operatorname{bcirc}(\mathcal X_c)
(\breve F_{\breve\mu}^{*}\otimes I_b)
(P_m\otimes I_b)\breve\alpha
\\
&=
\operatorname{diag}
(\breve D_{2,1},\ldots,\breve D_{2,m})
(P_m\otimes I_b)\breve\alpha.
\end{aligned}
\]
Combining the above result with \eqref{xd} completes the proof. 
\end{proof}

\begin{remark}
The permutation matrix $P_m$ is defined in \eqref{permutation}. For each $t\ge2$, right multiplication by $P_m\otimes I_b$ moves the block
$\breve D_{2,t}\breve\alpha$ from position $(t,t)$
to position $(t,p)$, where
$p=m+2-t.$ Hence, for
$t=2,\ldots,\lfloor(m+1)/2\rfloor$ and $p=m+2-t$
the principal block submatrix of $\breve D$ indexed by $t$ and $p$ is
\begin{align}\label{paired structure}
\begin{bmatrix}
\breve D_{1,t} & \breve D_{2,t}\breve\alpha\\[0.2mm]
\breve D_{2,p}\breve\alpha & \breve D_{1,p}
\end{bmatrix}.
\end{align}
The index $t=1$ is self-paired; when $m$ is even,
$t=m/2+1$ is also self-paired.

Thus, any Fourier-domain representation associated with a block-circulant matrix of the tensor must possess this paired block structure \eqref{paired structure}, which must therefore be preserved when recovering tensor factors with the block-circulant representation through the inverse Fourier transform.

Here we follow the notation of \cite{PanNg2024}. We note that, in Theorem~4 of \cite{PanNg2024}, an unrestricted quaternion SVD of $\breve G_t$ guarantees the matrix decomposition and unitarity, but does not guarantee that the diagonal blocks of $\breve U_t$ and $\breve V_t$ belong to $\mathbb C_{\breve\mu}$ and that their off-diagonal blocks belong to $\mathbb C_{\breve\mu}\breve\alpha$. 

For example,
\[
I_2=
\operatorname{diag}(\breve\alpha,1)\,I_2\,
\operatorname{diag}(\breve\alpha,1)^*
\]
is a valid quaternion matrix SVD. Although $I_2$ satisfies the paired block structure 
\eqref{paired structure}, the displayed
unitary matrices do not: their first diagonal entry
$\breve\alpha$ lies outside $\mathbb C_{\breve\mu}$.

In other words, the paired block structure in \eqref{paired structure} is not necessarily preserved. Consequently, the resulting matrix factors cannot, in general, be transformed back through the inverse Fourier transform to tensor factors with block-circulant representations.
\end{remark}

To overcome the above difficulty, we introduce a complexification procedure that preserves the paired block structure \eqref{paired structure}. On this basis, we revise and complete the proof of the quaternion T-SVD and modify the corresponding algorithm. The resulting construction guarantees the validity of the decomposition in the tensor setting.
\begin{theorem}[T-SVD]
\label{cor:t-product-svd}
For every $\mathcal T\in\mathbb H^{n_1\times n_2\times m}$, there exist
$\mathcal U\in\mathbb H^{n_1\times n_1\times m}$ and
$\mathcal V\in\mathbb H^{n_2\times n_2\times m}$, unitary under the T-product,
and an f-diagonal tensor $\mathcal S\in\mathbb H^{n_1\times n_2\times m}$ such that
\begin{equation*}
\mathcal T=\mathcal U\star\mathcal S\star\mathcal V^{*}.
\end{equation*}
\end{theorem}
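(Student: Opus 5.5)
We work entirely in the Fourier domain given by Lemma~\ref{thm:bcirc-fourier-equivalence}. The idea is to build, for each paired index set $\{t,p\}$, matrix SVD factors that keep the paired block structure \eqref{paired structure}. The inverse Fourier transform then returns block-circulant matrices, and hence tensors.

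First we record the converse of Lemma~\ref{thm:bcirc-fourier-equivalence}. Let $\breve D$ be any matrix of the form $\operatorname{diag}(D_{1,1},\ldots,D_{1,m})+\operatorname{diag}(D_{2,1},\ldots,D_{2,m})(P_m\otimes I)\breve\alpha$ with $D_{1,t},D_{2,t}\in\mathbb C_{\breve\mu}^{a\times b}$ arbitrary. Then $(\breve F_{\breve\mu}^*\otimes I_a)\breve D(\breve F_{\breve\mu}\otimes I_b)=\operatorname{bcirc}(\mathcal X)$, where $\mathcal X=\mathcal X_d+\mathcal X_c\breve\alpha$ and $\mathcal X_d,\mathcal X_c$ are the inverse complex DFTs of the $D_{1,t}$ and $D_{2,t}$. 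This follows because the map in \eqref{ddtt} is a bijection between complex tensors and arbitrary tuples of complex Fourier coefficients, so we can simply read the lemma backwards.

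We also need three closure facts about this class of matrices: it is closed under products, under conjugate transposes, and contains the identity. Closure under products comes from $\breve\alpha z=\overline z\breve\alpha$ together with $P_m^2=I$, and this mirrors $\operatorname{bcirc}(\mathcal A\star\mathcal B)=\operatorname{bcirc}(\mathcal A)\operatorname{bcirc}(\mathcal B)$. Because $\operatorname{bcirc}$ is injective and the Fourier conjugation is a similarity, the tensor statement $\mathcal T=\mathcal U\star\mathcal S\star\mathcal V^*$ reduces to a matrix identity $\breve D_T=\breve D_U\breve D_S\breve D_V^*$. Here every factor must have the paired form, $\breve D_U$ and $\breve D_V$ must be unitary, and $\breve D_S$ must come from an f-diagonal tensor.

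Next we decouple the indices. Up to a permutation, $\breve D$ is block diagonal with one block per pair $\{t,p\}$, namely \eqref{paired structure}, plus $1\times1$ blocks $D_{1,t}+D_{2,t}\breve\alpha$ at the self-paired indices. For a pair we apply complexification. Writing $X\breve\alpha=\breve\alpha\overline X$, the map
\[
\begin{bmatrix} A & B\breve\alpha\\ C\breve\alpha & E\end{bmatrix}\longmapsto \begin{bmatrix} A & B\\ \overline{C} & \overline{E}\end{bmatrix}
\]
(conjugating the second block row) is an injective algebra homomorphism onto all complex block matrices. We check that it respects multiplication and conjugate transposition. We then take an ordinary complex SVD $\widehat U\widehat\Sigma\widehat V^*$ of the image and map the factors back. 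The pulled-back factors are unitary and have the paired form, with diagonal blocks in $\mathbb C_{\breve\mu}$ and off-diagonal blocks in $\mathbb C_{\breve\mu}\breve\alpha$.

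For a self-paired index the block is a general quaternion matrix $A+B\breve\alpha$. We use the standard complex adjoint $\begin{bmatrix}A&B\\-\overline B&\overline A\end{bmatrix}$ and the known fact that the quaternion SVD can be chosen compatibly with it, as in the quaternion SVD via complex adjoints. These factors are themselves of the paired form with $t=p$, so no structural constraint is violated.

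The main obstacle is the singular value block. The complex SVD gives a diagonal real $\widehat\Sigma$, but pulled back through the pair homomorphism, a diagonal matrix on the $2\times2$ block pattern has zero off-diagonal part. In Fourier terms this gives $D_{2,t}=D_{2,p}=0$ and $D_{1,t},D_{1,p}$ real diagonal, with $D_{1,p}$ entering conjugated, which remains real. Hence $\mathcal S_c=0$ and $\mathcal S_d$ has all Fourier slices diagonal, so its inverse DFT has diagonal frontal slices and $\mathcal S$ is f-diagonal. The self-paired indices give real diagonal singular values as well.

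The remaining care is bookkeeping for rectangular $a\ne b$ in the pairing $(P_m\otimes I_b)$, and for $m$ even, where index $m/2+1$ must be treated as a second self-paired block. Assembling all blocks and applying the converse of the lemma yields $\mathcal U$, $\mathcal V$ T-unitary and $\mathcal S$ f-diagonal, which proves Theorem~\ref{cor:t-product-svd}.
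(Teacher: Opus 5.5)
Your overall route is the paper's: pass to the Fourier domain, decouple the pairs $\{t,p\}$, complexify each paired block, take a complex SVD, pull the factors back so they keep the paired structure, use a quaternion SVD at the self-paired indices, and invert the Fourier transform. Two steps, however, fail as you wrote them.

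\textbf{The complexification map.} Your map is not multiplicative. Take $G=\left[\begin{smallmatrix}0&\breve\alpha\\ \breve\alpha&0\end{smallmatrix}\right]$, i.e.\ $A=E=0$, $B=C=1$. Then $G^2=-I$, but its image $\left[\begin{smallmatrix}0&1\\ 1&0\end{smallmatrix}\right]$ squares to $I$. Similarly, $G=\left[\begin{smallmatrix}1&\breve\alpha\\ \breve\alpha&1\end{smallmatrix}\right]$ has $G^*G=2I$, while its image $\left[\begin{smallmatrix}1&1\\ 1&1\end{smallmatrix}\right]$ has singular values $2$ and $0$. So pulling back a complex SVD of the image cannot give an SVD of $G$. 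The off-diagonal blocks need opposite signs:
\[
\begin{bmatrix}A&B\breve\alpha\\ C\breve\alpha&E\end{bmatrix}\longmapsto
\begin{bmatrix}A&B\\ -\overline{C}&\overline{E}\end{bmatrix}
=\breve L_{n_1}\begin{bmatrix}A&B\breve\alpha\\ C\breve\alpha&E\end{bmatrix}\breve R_{n_2}.
\]
This is exactly the paper's $\breve H_t$. With this sign the map is multiplicative and compatible with ${}^*$; equivalently, one can argue directly from the unitarity of $\breve L_s$ and $\breve R_s$, as the paper does.

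\textbf{The singular value block when $n_1\neq n_2$.} You claim a diagonal $\widehat\Sigma$ pulls back with zero off-diagonal blocks. This is false for the standard ordering of a $2n_1\times 2n_2$ SVD. For $n_1=1$, $n_2=2$, the entry $\widehat\Sigma_{22}=\sigma_2$ lies in row block $2$ and column block $1$. Whenever $\sigma_2\neq0$, it pulls back to a nonzero block $D_{2,p}\breve\alpha$ whose nonzero entry is off the diagonal. Then $\mathcal S_c\neq0$ has non-diagonal frontal slices, and $\mathcal S$ is not f-diagonal.

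This is not bookkeeping in $P_m\otimes I_b$. You must permute the left and right singular vectors so that the middle factor is $\operatorname{diag}(\Sigma_t,\Sigma_p)$ with $\Sigma_t,\Sigma_p\in\mathbb R^{n_1\times n_2}$ rectangular diagonal. This is possible because there are $2\min(n_1,n_2)$ singular values and exactly that many diagonal slots of $\operatorname{diag}(\Sigma_t,\Sigma_p)$, lying in distinct rows and columns. The paper does this explicitly (``after a suitable ordering of its singular triplets'').

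With these two repairs your argument coincides with the paper's proof. Your detour through the complex adjoint at self-paired indices is harmless but unnecessary: there the admissible blocks are arbitrary quaternion matrices, so any quaternion SVD will do.
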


\begin{proof}
Let
$\breve D=
(\breve F_{\breve\mu}\otimes I_{n_1})
\operatorname{bcirc}(\mathcal T)
(\breve F_{\breve\mu}^{*}\otimes I_{n_2}).
$ By Lemma~\ref{thm:bcirc-fourier-equivalence}, $\breve D$ has the
paired block structure \eqref{paired structure}. At the fixed index $t=1$, and also at $t=m/2+1$ when $m$ is even, the corresponding diagonal block is
$
\breve D_{1,t}+\breve D_{2,t}\breve\alpha
\in\mathbb H^{n_1\times n_2},
$
which is an arbitrary quaternion matrix.

For the remaining indices, let $t=2,\ldots,\lfloor(m+1)/2\rfloor$ and $p=m+2-t$. The corresponding paired block submatrix of $\breve D$ is
\[
\breve G_t
=
\begin{bmatrix}
\breve D_{1,t} & \breve D_{2,t}\breve\alpha\\
\breve D_{2,p}\breve\alpha & \breve D_{1,p}
\end{bmatrix}
\in\mathbb H^{2n_1\times 2n_2}.
\]
For a positive integer $s$, define
\begin{align}\label{RL}
\breve L_s=\begin{bmatrix}I_s&0\\0&\breve\alpha I_s\end{bmatrix},
\qquad
\breve R_s=\breve L_s^*
=\begin{bmatrix}I_s&0\\0&-\breve\alpha I_s\end{bmatrix}.
\end{align}
Since $\breve{\alpha}^2=-1$, both $\breve L_s$ and $\breve R_s$ are unitary and $\breve R_s=\breve L_s^{-1}$. Using $\breve{\alpha}z=\bar{z}\breve{\alpha}$ for $z\in\mathbb{C}_{\breve{\mu}}$, we obtain
\begin{equation*}\label{eq:t-complex-matrix}
\breve H_t:=\breve L_{n_1}\breve G_t\breve R_{n_2}
=\begin{bmatrix}
\breve D_{1,t}&\breve D_{2,t}\\[1mm]
-\overline{\breve D_{2,p}}&\overline{\breve D_{1,p}}
\end{bmatrix}
\in\mathbb C_{\breve\mu}^{2n_1\times2n_2}.
\end{equation*} 
Take a complex SVD of $\breve H_t$ over $\mathbb C_{\breve{\mu}}$. After a suitable ordering of its singular triplets, we have
\begin{equation}\label{eq:t-complex-svd}
\breve H_t=U_t
\begin{bmatrix}\Sigma_t&0\\0&\Sigma_p\end{bmatrix}V_t^*,
\end{equation}
where $U_t\in\mathbb C_{\breve{\mu}}^{2n_1\times 2n_1}$ and $V_t\in\mathbb C_{\breve{\mu}}^{2n_2\times 2n_2}$ are unitary, while $\Sigma_t,\Sigma_p\in\mathbb R^{n_1\times n_2}$ are nonnegative rectangular diagonal matrices.
Now define
\begin{equation}\label{eq:t-unitary-matrices}
\widetilde U_t=\breve R_{n_1}U_t\breve L_{n_1},
\qquad
\widetilde V_t=\breve R_{n_2}V_t\breve L_{n_2}.
\end{equation}
The unitarity of $U_t$, $V_t$, $\breve L_s$, and $\breve R_s$ immediately implies that $\widetilde U_t$ and $\widetilde V_t$ are also unitary. Partition $U_t$ and $V_t$ conformably as
\begin{align*}
   U_t=\begin{bmatrix}
       U_{11}&U_{22}\\U_{21}&U_{22}
   \end{bmatrix},\qquad V_t=\begin{bmatrix}
       V_{11}&V_{22}\\V_{21}&V_{22}
   \end{bmatrix},
\end{align*}
where all the subblocks belong to $\mathbb C_{\breve\mu}$. By \eqref{RL}, it follows that
\begin{equation}\label{eq:t-unitary-entries}
\widetilde U_t=
\begin{bmatrix}
U_{11}&U_{12}\breve\alpha\\[1mm]
-\overline{U_{21}}\breve\alpha&\overline{U_{22}}
\end{bmatrix},\qquad
\widetilde V_t=
\begin{bmatrix}
V_{11}&V_{12}\breve\alpha\\[1mm]
-\overline{V_{21}}\breve\alpha&\overline{V_{22}}
\end{bmatrix}.
\end{equation}
Thus, the diagonal blocks belong to $\mathbb C_{\breve \mu}$, whereas the off-diagonal blocks belong to $\mathbb C_{\breve \mu}\breve \alpha$. Hence $\widetilde U_t$ and $\widetilde V_t$ have the same paired block structure as $\breve G_t$. Moreover, since $\Sigma_t$ and $\Sigma_p$ are real diagonal matrices,
they are invariant under the corresponding transformation. Consequently,
\[
\breve L_{n_1}
\begin{bmatrix}\Sigma_t&0\\0&\Sigma_p\end{bmatrix}
\breve R_{n_2}
=\begin{bmatrix}\Sigma_t&0\\0&\Sigma_p\end{bmatrix}.
\]
Combining \eqref{eq:t-complex-svd} and \eqref{eq:t-unitary-matrices} with the above invariance relation and the definition of $\breve H_t$ yields
\begin{equation*}\label{eq:t-block-decomposition}
\breve G_t=\widetilde U_t
\begin{bmatrix}\Sigma_t&0\\0&\Sigma_p\end{bmatrix}\widetilde V_t^*.
\end{equation*}
For the fixed index $t=1$, and also for $t=m/2+1$ when $m$ is even, we apply the quaternion matrix SVD to the corresponding diagonal block. Combining these factorizations with those of all paired blocks yields
\begin{align}\label{dusigmav}
\breve D
=
\breve U\Sigma\breve V^{*},
\qquad
\Sigma
=
\operatorname{diag}(\Sigma_1,\ldots,\Sigma_m),
\end{align}
where $\breve U$ and $\breve V$ are assembled from the factors
$\widetilde U_t$ and $\widetilde V_t$ associated with the paired indices, together with the corresponding quaternion SVD factors at the fixed indices.

Since the index pairs are disjoint, a simultaneous permutation that groups each pair \((t,p)\) transforms \(\breve U\) and \(\breve V\) into block-diagonal matrices whose diagonal blocks are unitary. It follows that both $\breve U$ and $\breve V$ are unitary.

Moreover, by \eqref{eq:t-unitary-entries}, each paired block of $\breve U$ and $\breve V$ has the paired block structure \eqref{paired structure}. At each fixed index, the corresponding quaternion block admits a unique decomposition into its $\mathbb C_{\breve\mu}$ and $\mathbb C_{\breve\mu}\breve\alpha$ components. Consequently, there exist matrices
\[
U_{1,t},U_{2,t}\in\mathbb C_{\breve\mu}^{n_1\times n_1},
\qquad
V_{1,t},V_{2,t}\in\mathbb C_{\breve\mu}^{n_2\times n_2},
\]
such that
\begin{align*}
\breve U
&=
\operatorname{diag}(U_{1,1},\ldots,U_{1,m})
+
\operatorname{diag}(U_{2,1},\ldots,U_{2,m})
(P_m\otimes I_{n_1})\breve\alpha,\\
\breve V
&=
\operatorname{diag}(V_{1,1},\ldots,V_{1,m})
+
\operatorname{diag}(V_{2,1},\ldots,V_{2,m})
(P_m\otimes I_{n_2})\breve\alpha.
\end{align*}
Let $\mathcal U_d$ and $\mathcal U_c$ be obtained by applying the inverse Fourier transform along the third mode to
$\{U_{1,t}\}_{t=1}^{m}$ and $\{U_{2,t}\}_{t=1}^{m}$, respectively, and set $\mathcal U=\mathcal U_d+\mathcal U_c\breve\alpha.
$ The tensor $\mathcal V$ is defined analogously. By Lemma~\ref{thm:bcirc-fourier-equivalence}, it follows that
\begin{equation}\label{reverseuv}
\begin{aligned}
\operatorname{bcirc}(\mathcal U)
&=
(\breve F_{\breve\mu}^{*}\otimes I_{n_1})
\breve U
(\breve F_{\breve\mu}\otimes I_{n_1}),~
\operatorname{bcirc}(\mathcal V^{*})=
(\breve F_{\breve\mu}^{*}\otimes I_{n_2})
\breve V^{*}
(\breve F_{\breve\mu}\otimes I_{n_2}).
\end{aligned}
\end{equation}
Similarly, let $\mathcal S$ be obtained by applying the inverse
$\breve\mu$-Fourier transform along the third mode to
$\{\Sigma_t\}_{t=1}^{m}$. Since each $\Sigma_t$ is a real diagonal
matrix, every frontal slice of $\mathcal S$ is diagonal. Hence
$\mathcal S$ is $f$-diagonal, and
\begin{align}\label{Ssigma}
\operatorname{bcirc}(\mathcal S)
=
(\breve F_{\breve\mu}^{*}\otimes I_{n_1})
\Sigma
(\breve F_{\breve\mu}\otimes I_{n_2}).
\end{align}
By \cite[Lemma 1]{m=ab} and Definition \ref{def:t-product}, it follows that
\begin{align}\label{xyxy}
\operatorname{bcirc}(\mathcal X\star\mathcal Y)
=
\operatorname{bcirc}(\mathcal X)
\operatorname{bcirc}(\mathcal Y).
\end{align}
Transforming \eqref{dusigmav} back to the tensor setting, and using
\eqref{reverseuv}, \eqref{Ssigma} and \eqref{xyxy}, we obtain
\[
\operatorname{bcirc}(\mathcal T)
=
\operatorname{bcirc}(\mathcal U)
\operatorname{bcirc}(\mathcal S)
\operatorname{bcirc}(\mathcal V^*)
=
\operatorname{bcirc}(\mathcal U\star\mathcal S\star\mathcal V^*).
\]
Since $\operatorname{bcirc}$ is injective, it follows that
$\mathcal T=\mathcal U\star\mathcal S\star\mathcal V^*.$
Moreover, since $\breve U$ and $\breve V$ are unitary, \eqref{reverseuv} shows that
$\operatorname{bcirc}(\mathcal U)$ and $\operatorname{bcirc}(\mathcal V)$ are unitary. By \eqref{xyxy},
\[
\mathcal U^*\star\mathcal U
=
\mathcal U\star\mathcal U^*
=
\mathcal I,
\qquad
\mathcal V^*\star\mathcal V
=
\mathcal V\star\mathcal V^*
=
\mathcal I.
\]
Thus, $\mathcal U$ and $\mathcal V$ are $T$-unitary.
\end{proof}

\subsection{Computational Algorithm}
The constructive proof of Theorem~\ref{cor:t-product-svd} suggests two implementations of the quaternion T-SVD. The first is based on the block-circulant representation and preserves the paired block structure derived in the preceding subsection.
To avoid the cost of explicitly forming and manipulating the block-circulant matrix, we then develop an equivalent FFT-based implementation in the Fourier domain. The latter retains the same
decomposition while reducing the computational cost.

Throughout this subsection, we set $m=n_3$, $\breve\mu=\mathbf i$, and $\breve\alpha=\mathbf j$.
The mode-$3$ Fourier transform and its inverse are implemented by \texttt{fftq} and \texttt{ifftq}, respectively. We use $\operatorname{qsvd}_{\mathbb C}$ to denote the fast universal algorithm for QSVD in \cite[Algorithm~1]{DCTAML2026}.
We first present an implementation of Theorem~\ref{cor:t-product-svd}
using the block circulant representation.
\begin{algorithm}[H]
\caption{Structure-preserving algorithm for quaternion T-SVD via block-circulant matrices}
\label{alg:t-explicit}
\small
\begin{algorithmic}[1]
\Input $\mathcal A\in\mathbb H^{n_1\times n_2\times m}$.
\Output $\mathcal A=\mathcal U\star\mathcal S\star\mathcal V^*$,
with T-unitary $\mathcal U,\mathcal V$ and an f-diagonal
tensor $\mathcal S$.

\State Compute
$\breve D
=(\breve F_{\breve\mu}\otimes I_{n_1})
\mathtt{bcirc}(\mathcal A)
(\breve F_{\breve\mu}^*\otimes I_{n_2}).
$ Write $\breve D_{rs}\in\mathbb H^{n_1\times n_2}$
for its $(r,s)$ block.

\State Initialize
$\breve U=0\in\mathbb H^{mn_1\times mn_1}$,
$\Sigma=0\in\mathbb H^{mn_1\times mn_2}$, and
$\breve V=0\in\mathbb H^{mn_2\times mn_2}$.

\State For $t=1$, and also $t=m/2+1$ if $m$ is even,
compute
{\footnotesize
\[
(\breve U_t,\Sigma_t,\breve V_t)
=\operatorname{qsvd}_{\mathbb C}(\breve D_{tt}),
\]
}
and place these matrices in the respective $(t,t)$ blocks
of $\breve U,\Sigma,\breve V$.

\For{$t=2,\ldots,\lfloor(m+1)/2\rfloor$}
    \State Set $p=m+2-t$ and form
    {\footnotesize
    \[
    \breve G_t=
    \begin{bmatrix}
    \breve D_{tt}&\breve D_{tp}\\
    \breve D_{pt}&\breve D_{pp}
    \end{bmatrix},
    \qquad
    \breve H_t
    =\breve L_{n_1}\breve G_t\breve R_{n_2}
    \in\mathbb C^{2n_1\times2n_2}.
    \]
    }

    \State Compute the complex SVD with the column order described above:
    $
    \breve H_t
    =U_t\operatorname{diag}(\Sigma_t,\Sigma_p)V_t^*.
    $

    \State Set
    $\breve U_t=\breve R_{n_1}U_t\breve L_{n_1}$ and
    $\breve V_t=\breve R_{n_2}V_t\breve L_{n_2}$.

    \State Insert the four blocks of $\breve U_t$ and
    $\breve V_t$, in their existing order, into positions
    $(t,t),(t,p),(p,t),(p,p)$ of $\breve U$ and $\breve V$.

    \State Set
    $\Sigma_{tt}=\Sigma_t$ and $\Sigma_{pp}=\Sigma_p$.
\EndFor

\State Compute
{\footnotesize
\begin{align*}
\mathtt{bcirc}(\mathcal U)
=(\breve F_{\breve\mu}^*\otimes I_{n_1})
  \breve U(\breve F_{\breve\mu}\otimes I_{n_1}),~
\mathtt{bcirc}(\mathcal S)
=(\breve F_{\breve\mu}^*\otimes I_{n_1})
  \Sigma(\breve F_{\breve\mu}\otimes I_{n_2}),~
\mathtt{bcirc}(\mathcal V)
=(\breve F_{\breve\mu}^*\otimes I_{n_2})
  \breve V(\breve F_{\breve\mu}\otimes I_{n_2}).
\end{align*}
}
\State Recover $\mathcal U,\mathcal S,\mathcal V$
by applying $\mathtt{fold}$ to the first block column
of each corresponding block circulant matrix.

\end{algorithmic}
\end{algorithm}
The preceding algorithm requires the explicit construction of the block-circulant matrix. To reduce the associated computational cost, we
exploit its Fourier-domain representation and compute the same T-SVD directly in the Fourier domain. This yields the following algorithm.
\begin{algorithm}[H]
\caption{Fast structure-preserving algorithm for quaternion T-SVD via FFTs}
\label{alg:t-fast}
\small
\begin{algorithmic}[1]
\Input $\mathcal A\in\mathbb H^{n_1\times n_2\times m}$.
\Output $\mathcal A=\mathcal U\star\mathcal S\star\mathcal V^*$,
with T-unitary $\mathcal U,\mathcal V$ and an f-diagonal tensor $\mathcal S$.

\State Compute the complex tensors
$
\breve{\mathcal D}_1
\gets\operatorname{fft}(\mathcal A_{\mathbf d},[],3),$ and $
\breve{\mathcal D}_2
\gets\operatorname{fft}(\overline{\mathcal A_{\mathbf c}},[],3).
$ Write $\breve D_{1,t}=\breve{\mathcal D}_1^{(t)}$ and $\breve D_{2,t}=\breve{\mathcal D}_2^{(t)}$
for their $t$-th frontal slices.

\State Initialize
$\widehat{\mathcal U}\in\mathbb H^{n_1\times n_1\times m}$,
$\widehat{\mathcal S}\in\mathbb H^{n_1\times n_2\times m}$, and
$\widehat{\mathcal V}\in\mathbb H^{n_2\times n_2\times m}$
with zeros.

\State For $t=1$, and also $t=m/2+1$ if $m$ is even, set
{\footnotesize
\[
(\widehat{\mathcal U}^{(t)},
 \widehat{\mathcal S}^{(t)},
 \widehat{\mathcal V}^{(t)})
=\operatorname{qsvd}_{\mathbb C}
(\breve D_{1,t}+\breve D_{2,t}\mathbf j).
\]
}
\For{$t=2,\ldots,\lfloor(m+1)/2\rfloor$}
    \State Set $p=m+2-t$ and form the complex matrix
    {\footnotesize
    \[
    \breve H_t=
    \begin{bmatrix}
    \breve D_{1,t}&\breve D_{2,t}\\[1mm]
    -\overline{\breve D_{2,p}}&\overline{\breve D_{1,p}}
    \end{bmatrix}.
    \]
    }
    \State Compute the complex SVD with the column order
    described above:
    $
    \breve H_t
    =U_t\operatorname{diag}(\Sigma_t,\Sigma_p)V_t^*.
    $

    \State Set and partition
    {\footnotesize
    \[
    \breve U_t=\breve R_{n_1}U_t\breve L_{n_1}
    =
    \begin{bmatrix}
    \breve U_{11}&\breve U_{12}\\
    \breve U_{21}&\breve U_{22}
    \end{bmatrix},
    \qquad
    \breve V_t=\breve R_{n_2}V_t\breve L_{n_2}
    =
    \begin{bmatrix}
    \breve V_{11}&\breve V_{12}\\
    \breve V_{21}&\breve V_{22}
    \end{bmatrix}.
    \]
    }

    \State Store the Fourier slices:
    {\footnotesize
    \begin{align*}
    &\widehat{\mathcal U}^{(t)}
    =\breve U_{11}+\breve U_{12},~
    \widehat{\mathcal V}^{(t)}
    =\breve V_{11}+\breve V_{12},~
    \widehat{\mathcal S}^{(t)}=\Sigma_t,\\&
    \widehat{\mathcal U}^{(p)}
    =\breve U_{21}+\breve U_{22},~
    \widehat{\mathcal V}^{(p)}
    =\breve V_{21}+\breve V_{22},~
    \widehat{\mathcal S}^{(p)}=\Sigma_p.
    \end{align*}}
\EndFor

\State
$\mathcal U\gets\mathtt{ifftq}(\widehat{\mathcal U},[],3)$,
$\mathcal S\gets\mathtt{ifftq}(\widehat{\mathcal S},[],3)$,
$\mathcal V\gets\mathtt{ifftq}(\widehat{\mathcal V},[],3)$.

\end{algorithmic}
\end{algorithm}
In Algorithm~\ref{alg:t-fast}, $\breve D_{1,t}$ and
$\breve D_{2,t}$ denote the $t$-th frontal slices of the two FFT outputs, respectively. The matrix $\breve H_t$ is precisely have the paired block structure \eqref{paired structure}, and the sums in the storage step combine the two nonzero blocks in each corresponding block row according to \eqref{equal}. Hence, the quaternion T-SVD can be computed using FFTs and small matrix SVDs without explicitly forming the block-circulant matrices or Kronecker products. Algorithm~\ref{alg:t-fast} therefore provides a corrected and accelerated version of the quaternion T-SVD algorithm in \cite[Algorithm~2]{PanNg2024}, preserving the required paired block structure while avoiding the explicit construction of the full block-circulant matrix.

\section{Numerical examples}
\subsection{Example 1}
This example illustrates the structural deficiency of the unrestricted quaternion SVD in the passage from the matrix decomposition to the tensor decomposition, and verifies the corrected construction proposed above. All computations were performed in MATLAB R2022b with QTFM 3.7, and the numerical values are reported to four decimal places.
\begin{example}
    For the numerical implementation, we take $\breve\mu=\mathbf i$, $\breve\alpha=\mathbf j$, and $\breve\beta=\mathbf k$.
Generate $\mathcal A\in\mathbb H^{2\times2\times4}$ by
\texttt{randq(2,2,4)} with the Twister random generator and seed $20260917$.
The Fourier transform of $\operatorname{bcirc}(\mathcal A)$,
restricted to block rows and columns $2$ and $4$, gives
{\small\setlength{\arraycolsep}{3pt}
\[
\breve G_2\approx
\left[\begin{array}{cc|cc}
-1.0692+1.2258\mathbf i & 0.5285-0.5854\mathbf i & 1.5490\mathbf j-1.0009\mathbf k & 0.6640\mathbf j-0.2094\mathbf k \\
0.4284-0.2049\mathbf i & 1.3895-2.7648\mathbf i & 0.6109\mathbf j-0.6712\mathbf k & 1.6522\mathbf j-0.3722\mathbf k \\
\hline
0.1031\mathbf j+1.6527\mathbf k & -1.1563\mathbf j-0.0844\mathbf k & -0.5221-0.7450\mathbf i & 3.1291+0.6682\mathbf i \\
-1.5335\mathbf j-2.3936\mathbf k & -0.8445\mathbf j-0.1057\mathbf k & -0.3083-0.8920\mathbf i & 0.2070-0.6972\mathbf i
\end{array}\right].
\]
}
The four singular values of $\breve G_2$ are $4.895044$, $3.888698$, $2.577465$, and $0.543545$. They are positive and distinct, and
$\operatorname{bcirc}(\mathcal A)$ is also nonsingular.

We first apply the unrestricted quaternion SVD used in the implementation of
\cite{PanNg2024},
$$\breve G_2=\breve U_2^{\mathrm{old}}\Sigma
(\breve V_2^{\mathrm{old}})^*.$$ 
The resulting matrices $\breve U_2^{\mathrm{old}}$ and $\breve V_2^{\mathrm{old}}$ are shown below.
{\small\setlength{\arraycolsep}{3pt}
\[
\breve U_2^{\mathrm{old}}\approx
\left[\begin{array}{cc|cc}
-0.1248+0.3478\mathbf i & 0.3407-0.1594\mathbf i & 0.2628+0.4145\mathbf i & -0.6031+0.3425\mathbf i \\
-0.0447+0.6178\mathbf i & -0.0861+0.3847\mathbf i & 0.5217-0.1162\mathbf i & 0.1382-0.3951\mathbf i \\
\hline
0.3195\mathbf j+0.5817\mathbf k & 0.2047\mathbf j-0.2720\mathbf k & 0.5352\mathbf j-0.3754\mathbf k & 0.0750\mathbf j+0.1028\mathbf k \\
0.1018\mathbf j-0.1703\mathbf k & 0.4891\mathbf j+0.5898\mathbf k & -0.0445\mathbf j-0.2098\mathbf k & 0.4040\mathbf j+0.4053\mathbf k
\end{array}\right],
\]
}
{\small\setlength{\arraycolsep}{3pt}
\[
\breve V_2^{\mathrm{old}}\approx
\left[\begin{array}{cc|cc}
0.3391 & -0.8398 & 0.1860 & -0.3809 \\
-0.5161+0.2730\mathbf i & -0.4112+0.1354\mathbf i & 0.1611+0.3888\mathbf i & 0.5258+0.1345\mathbf i \\
\hline
0.1029\mathbf j+0.1713\mathbf k & -0.2463\mathbf j+0.2109\mathbf k & -0.0729\mathbf j+0.6999\mathbf k & 0.5990\mathbf j+0.0293\mathbf k \\
0.4061\mathbf j+0.5825\mathbf k & 0.0449\mathbf j+0.0114\mathbf k & 0.2204\mathbf j-0.4945\mathbf k & 0.3703\mathbf j+0.2520\mathbf k
\end{array}\right].
\]
}
In both matrices, the two blocks in the upper block row have entries in $\mathbb C_{\mathbf i}$, whereas the two blocks in the lower block row
have entries in $\mathbb C_{\mathbf i}\mathbf j$. Hence, although the quaternion SVD gives a valid matrix factorization and unitary matrix
factors, the resulting $\breve U_2^{\mathrm{old}}$ and $\breve V_2^{\mathrm{old}}$ do not have the required paired block structure \eqref{paired structure}. Consequently, the inverse Fourier transforms of these factors are not block-circulant matrices and therefore cannot be identified with \(\operatorname{bcirc}(\mathcal U)\) and \(\operatorname{bcirc}(\mathcal V)\) for tensor factors \(\mathcal U\) and \(\mathcal V\).

We now apply the corrected construction. Define
\[
L_2=\operatorname{diag}(I_2,\mathbf jI_2),\qquad
R_2=L_2^*,\qquad H_2=L_2\breve G_2R_2\in\mathbb C_{\mathbf i}^{4\times4}.
\]
Thus, the quaternion paired block is converted into a complex matrix over $\mathbb C_{\mathbf i}$.
Compute the complex SVD, $H_2=\widehat U_2\Sigma\widehat V_2^*$ and transform the singular-vector factors back by
\[
\breve U_2^{\mathrm{new}}=R_2\widehat U_2L_2,\qquad
\breve V_2^{\mathrm{new}}=R_2\widehat V_2L_2.
\]
The resulting matrices are
{\small\setlength{\arraycolsep}{3pt}
\[
\breve U_2^{\mathrm{new}}\approx
\left[\begin{array}{cc|cc}
-0.1248+0.3478\mathbf i & 0.3407-0.1594\mathbf i & 0.2628\mathbf j+0.4145\mathbf k & -0.6031\mathbf j+0.3425\mathbf k \\
-0.0447+0.6178\mathbf i & -0.0861+0.3847\mathbf i & 0.5217\mathbf j-0.1162\mathbf k & 0.1382\mathbf j-0.3951\mathbf k \\
\hline
0.3195\mathbf j+0.5817\mathbf k & 0.2047\mathbf j-0.2720\mathbf k & -0.5352+0.3754\mathbf i & -0.0750-0.1028\mathbf i \\
0.1018\mathbf j-0.1703\mathbf k & 0.4891\mathbf j+0.5898\mathbf k & 0.0445+0.2098\mathbf i & -0.4040-0.4053\mathbf i
\end{array}\right],
\]
}
{\small\setlength{\arraycolsep}{3pt}
\[
\breve V_2^{\mathrm{new}}\approx
\left[\begin{array}{cc|cc}
0.3391 & -0.8398 & 0.1860\mathbf j & -0.3809\mathbf j \\
-0.5161+0.2730\mathbf i & -0.4112+0.1354\mathbf i & 0.1611\mathbf j+0.3888\mathbf k & 0.5258\mathbf j+0.1345\mathbf k \\
\hline
0.1029\mathbf j+0.1713\mathbf k & -0.2463\mathbf j+0.2109\mathbf k & 0.0729-0.6999\mathbf i & -0.5990-0.0293\mathbf i \\
0.4061\mathbf j+0.5825\mathbf k & 0.0449\mathbf j+0.0114\mathbf k & -0.2204+0.4945\mathbf i & -0.3703-0.2520\mathbf i
\end{array}\right].
\]
}
Now the diagonal blocks have entries in $\mathbb C_{\mathbf i}$, while the
off-diagonal blocks have entries in $\mathbb C_{\mathbf i} \mathbf j$. Hence, \(\breve U_2^{\mathrm{new}}\) and \(\breve V_2^{\mathrm{new}}\) have exactly the paired block structure required \eqref{paired structure} for recovering the corresponding tensor factors.

To compare the two constructions quantitatively, define the relative matrix reconstruction error by
\[
e_{\mathrm{mat}}=
\frac{\|\breve G_2-\breve U_2\Sigma\breve V_2^*\|_F}
     {\|\breve G_2\|_F}.
\]

After incorporating the SVD factors at frequencies \(1\) and \(3\), recover the tensors \(\mathcal U\), \(\mathcal S\), and \(\mathcal V\) for each construction and evaluate the corresponding T-product reconstruction. Define
\[
e_{\mathrm{rec}}=
\frac{\|\mathcal A-\mathcal U*\mathcal S*\mathcal V^*\|_F}{\|\mathcal A\|_F},
\qquad
e_U=\frac{\|\mathcal U^* *\mathcal U-\mathcal I\|_F}{\|\mathcal I\|_F},
\qquad
e_V=\frac{\|\mathcal V^* *\mathcal V-\mathcal I\|_F}{\|\mathcal I\|_F}.
\]
The resulting matrix and tensor errors are
\begin{center}
\begin{tabular}{lcccc}
\hline
Construction & $e_{\mathrm{mat}}$ & $e_{\mathrm{rec}}$ & $e_U$ & $e_V$\\
\hline
Original & $3.3065\times10^{-15}$ & $1.2166$ & $7.0711\times10^{-1}$ & $7.0711\times10^{-1}$\\
Corrected & $3.2830\times10^{-15}$ & $2.8014\times10^{-15}$ & $8.2975\times10^{-16}$ & $6.1458\times10^{-16}$\\
\hline
\end{tabular}
\end{center}
Both constructions therefore reproduce the matrix factorization to machine precision. The distinction appears when the matrix factors are mapped back to the tensor setting. For the unrestricted quaternion SVD, the T-product reconstruction error is of order one, and the recovered factors fail the T-unitarity conditions. In contrast, the corrected construction yields both tensor reconstruction and T-unitarity errors at machine precision. This example therefore confirms that matrix-level SVD accuracy alone is insufficient; preservation of the paired block structure is essential for recovering a valid tensor SVD.
\end{example}

Algorithm~\ref{alg:t-fast} employs an implementation of
\cite[Algorithm~1]{DCTAML2026} to compute
$\operatorname{qsvd}_{\mathbb C}$.
For comparison, we replace $\operatorname{qsvd}_{\mathbb C}$ in Algorithm~\ref{alg:t-fast} with the quaternion \texttt{svd} routine in QTFM and refer to the resulting method as the corrected version of the T-SVD algorithm in \cite[Algorithm~2]{PanNg2024}. This method is labelled ``Corrected T-SVD'' in the figures.

\subsection{Example 2}
\subsubsection{Experimental Setup}
The input tensors are generated as
$
\mathcal A=\mathcal A_0+\mathcal A_1\mathbf i
             +\mathcal A_2\mathbf j+\mathcal A_3\mathbf k
\in\mathbb H^{n\times n\times16},
$
where the entries of the four real tensors are independent standard
normal random variables generated by MATLAB \texttt{randn}. We fix
$\breve\mu=\mathbf i$, $\breve\alpha=\mathbf j$, and
$\breve\beta=\mathbf k$, and select 16 values of $n$ using
\texttt{round(linspace(4,512,16))}. The random seed is 20260921.
One tensor is generated for each size and used by both implementations.

All computations are performed in MATLAB R2022b with QTFM 3.7 on an
Intel Core i5-13500HX computer with 16~GB of RAM, using one computational
thread. After an initial warm-up on the smallest input, each implementation
is run twice on the same tensor; the execution order is reversed in the
second run. The reported errors and times are the arithmetic means of
the two runs.

The relative reconstruction error is measured by
\begin{equation}\label{eq:tsvd-benchmark-error}
e_{\mathrm{rec}}
=\frac{\|\mathcal A-\mathcal U\star\mathcal S\star\mathcal V^*\|_F}
       {\|\mathcal A\|_F}.
\end{equation}
Thus the error is evaluated from the final tensor reconstruction under
the T-product. Decomposition time is measured by MATLAB
\texttt{tic}/\texttt{toc} and includes the forward FFTs, matrix SVDs,
and inverse FFTs. Input generation, reconstruction-error evaluation,
and file output are excluded from the timing.
\subsection{Results and discussion}

Figure~\ref{fig:tsvd-benchmark-time} shows that
Algorithm~\ref{alg:t-fast} is faster for all tested sizes, with speedup
factors ranging from $1.48$ to $5.13$ relative to the QTFM-based
implementation. For example, the mean times are $1.1988$~s and
$2.6424$~s at $n=241$, and $9.2497$~s and $47.4476$~s at $n=512$,
respectively. Since the two implementations differ only in the matrix
SVD routine used in Step~3, the observed timing difference reflects the
effect of this implementation.

Figure~\ref{fig:tsvd-benchmark-error} shows the relative reconstruction
errors on a logarithmic scale. For Algorithm~\ref{alg:t-fast}, the
errors range from $3.7430\times10^{-15}$ to
$3.5974\times10^{-14}$, compared with
$2.3296\times10^{-15}$ to $6.6910\times10^{-15}$ for the QTFM-based
implementation. At $n=512$, the corresponding errors are
$2.7778\times10^{-14}$ and $6.6910\times10^{-15}$. Although the
QTFM-based implementation yields slightly smaller reconstruction
errors, both methods achieve near-machine-precision accuracy.
Algorithm~\ref{alg:t-fast} therefore reduces the decomposition time
while maintaining comparable numerical accuracy over the tested sizes.

\begin{figure}[H]
\centering
\begin{subfigure}{0.49\linewidth}
\centering
\includegraphics[width=\linewidth]
{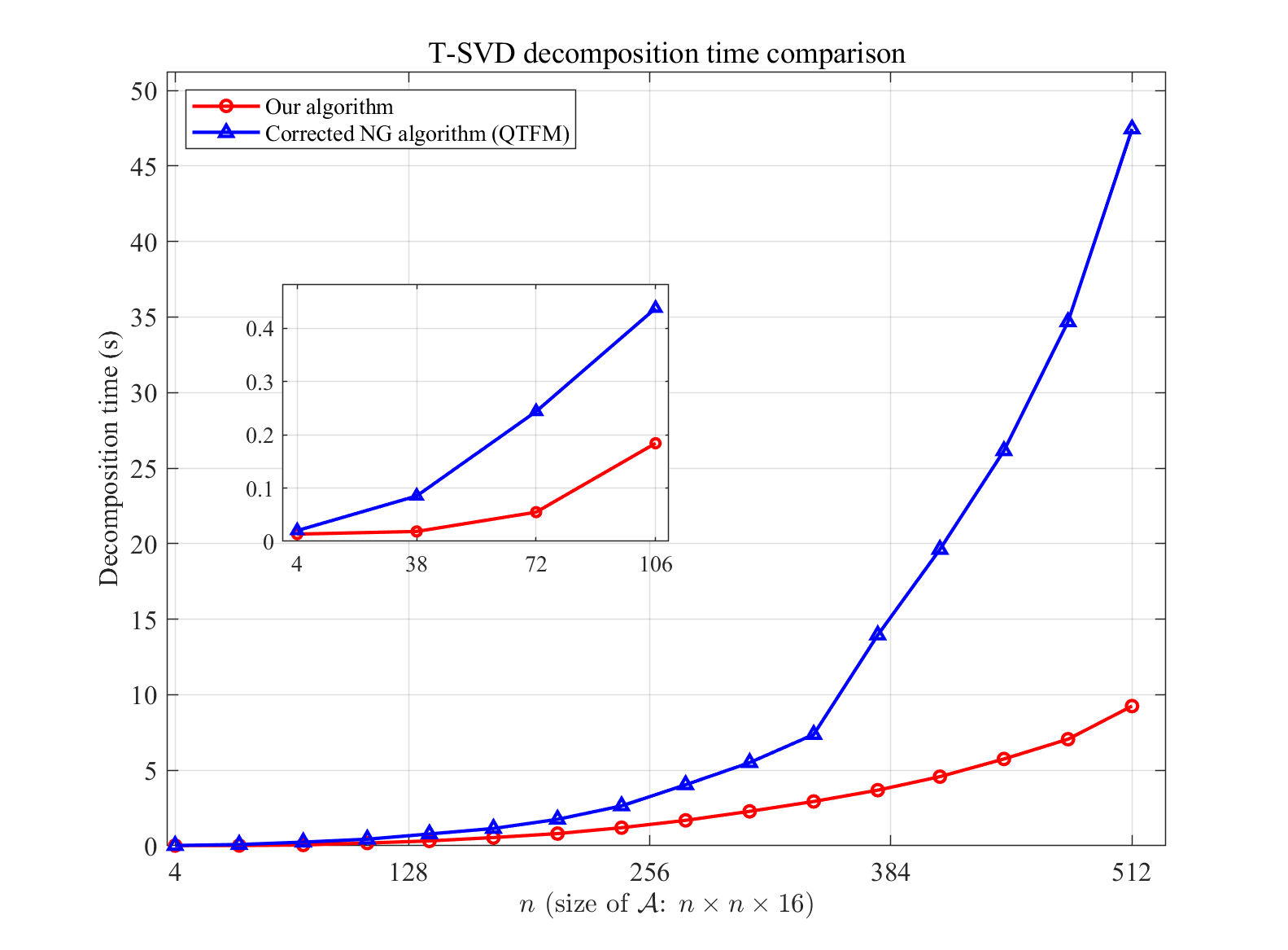}
\caption{Decomposition time.}
\label{fig:tsvd-benchmark-time}
\end{subfigure}
\hfill
\begin{subfigure}{0.49\linewidth}
\centering
\includegraphics[width=\linewidth]
{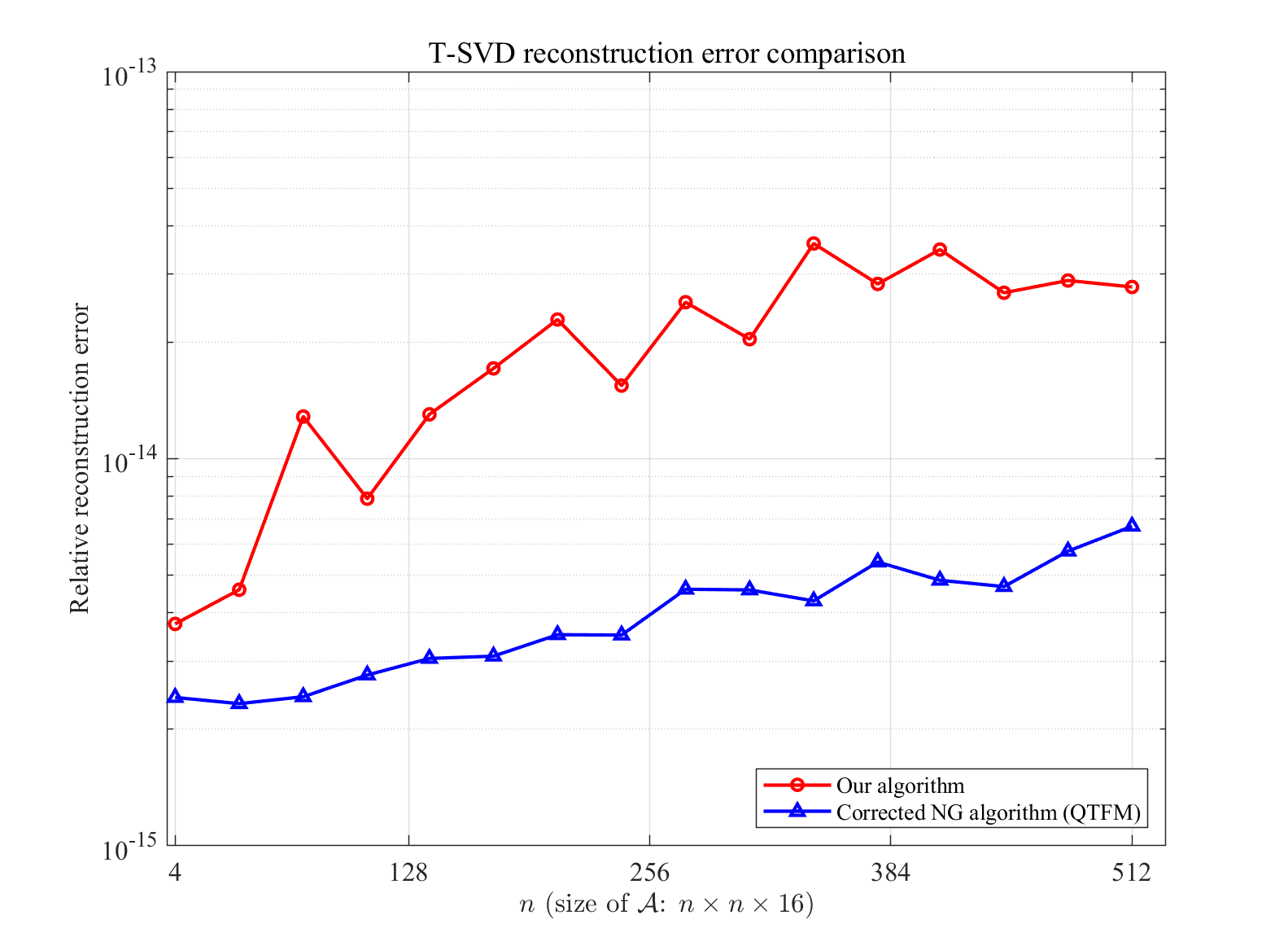}
\caption{Relative reconstruction error.}
\label{fig:tsvd-benchmark-error}
\end{subfigure}

\caption{Comparison of the two implementations for random quaternion
tensors of size $n\times n\times16$. Each point is the mean of two runs
on the same input. The time plot includes an enlargement of the first
four sizes.}
\label{fig:tsvd-benchmark}
\end{figure}

\section{Applications to Color Video Processing}
We apply Algorithm~\ref{alg:t-fast} to color video encryption and recovery using the \textit{kite-surf} and \textit{boat} sequences from the DAVIS 2017 TrainVal 480p dataset~\cite{DAVIS2017}. The two sequences contain 50 and 75 frames, respectively. Each frame is read from the JPEG sequence, converted to a double-precision RGB array with values in $[0,1]$, and resized to $256\times256$. The evaluation considers reconstruction quality, ciphertext statistics, and stability to input perturbations. All experiments are performed in MATLAB R2022b using the standard quaternion basis $\{\mathbf i,\mathbf j,\mathbf k\}$ and random seed 20260716.

\subsection{Encryption, Recovery, and Evaluation Metrics}
Let $\mathcal{A}=R\mathbf{i}+G\mathbf{j}+B\mathbf{k}$ denote the
quaternion tensor representing the RGB data. The decomposition and
subsequent reconstruction are given by
\[
\mathcal{A}
=\mathcal{U}\star\mathcal{S}\star\mathcal{V}^{*},
\qquad
\widetilde{\mathcal{A}}
=\mathcal{U}\star\widetilde{\mathcal{S}}\star\mathcal{V}^{*},
\]
where all singular values are retained and
$\widetilde{\mathcal{S}}$ is obtained by decrypting and rescaling
the encoded components of $\mathcal{S}$. The factors $\mathcal{U}$
and $\mathcal{V}$ are retained for reconstruction.
For each frontal slice of $\mathcal{S}$, the three imaginary
components are processed independently. Each component is min--max
scaled to $[0,255]$ and quantized to an 8-bit unsigned integer array,
denoted by $\overline{P}$. If the component range is smaller than
machine epsilon, its encoded array is set to zero. The componentwise
minima and maxima are retained for rescaling.

Encryption combines a logistic-map-generated XOR mask with an
Arnold permutation:
\begin{equation*}
y_{t+1}=3.97y_t(1-y_t),\quad
K_t=\operatorname{mod}(\lfloor10^{15}y_t\rfloor,256),\quad
C=\Pi^{20}(\overline P\oplus K),
\end{equation*}
Here, $\oplus$ denotes bitwise XOR, $K$ is the mask obtained by
reshaping the sequence $\{K_t\}$ to the dimensions of $\overline{P}$,
and $\Pi$ is the Arnold permutation with parameters $a=2$, $b=1$ and 20 permutation iterations \cite{Logistic,Arnold}. The initial logistic-map state is selected separately for
each slice and imaginary component.

Decryption applies the inverse permutation followed by the same
XOR mask, $\overline{P}=\Pi^{-20}(C)\oplus K,$
and rescales the recovered array using the stored extrema. The resulting imaginary components are combined with the retained real component to form $\widetilde{\mathcal{S}}$. Although no singular
values are discarded, reconstruction is generally lossy because of the 8-bit quantization. The displayed ciphertext images contain the
three encrypted imaginary components of $\mathcal{S}$.

Decomposition error uses the relative tensor residual in
\eqref{eq:tsvd-benchmark-error}. Recovery quality is measured before image or
video export, after clipping reconstructed RGB values to $[0,1]$.
For each frame, RGB errors are pooled into one MSE and
$\mathrm{PSNR}=10\log_{10}(1/\mathrm{MSE})$; channel SSIM values are averaged
using MATLAB \texttt{ssim} with \texttt{DynamicRange=1}
\cite{PSNRSSIM,Wang2004}. Video scores average all frames.

\subsection{Reconstruction Quality and Ciphertext Statistics}
Table~\ref{tab:app2-quality} reports the decomposition accuracy and reconstruction quality for the two test videos using Algorithm~2,
with PSNR and SSIM averaged over all frames of each video. The relative decomposition errors are below $9\times10^{-15}$, indicating high numerical accuracy. The reconstructed kite-surf and boat videos attain PSNR values of 62.3858 and 63.8569~dB, respectively, with corresponding SSIM values of 0.999738 and 0.999810. These values
indicate small reconstruction errors in both test videos. The quantized integer arrays are recovered exactly in both runs; component
quantization accounts for the remaining reconstruction discrepancy. Figure~\ref{fig:app2-frames} shows that the reconstructed frames closely reproduce the original scene details.

\begin{table}[!htbp]
\centering\small
\caption{Decomposition accuracy and reconstruction quality.}
\label{tab:app2-quality}
\begin{tabular}{lrrrr}
\toprule
Video & Frames & $e_{\rm dec}$ & PSNR (dB) & SSIM\\
\midrule
kite-surf & 50 & $8.6968\times10^{-15}$ & 62.3858 & 0.999738 \\
boat & 75 & $5.0514\times10^{-15}$ & 63.8569 & 0.999810 \\
\bottomrule
\end{tabular}
\end{table}

\begin{figure}[!htbp]
\centering
\setlength{\tabcolsep}{1pt}
\renewcommand{\arraystretch}{0.9}
\begin{tabular}{@{}c*{6}{c}@{}}
&\multicolumn{3}{c}{\textit{kite-surf}}&\multicolumn{3}{c}{\textit{boat}}\\[2pt]
&\footnotesize Frame 1&\footnotesize Frame 25&\footnotesize Frame 50&\footnotesize Frame 1&\footnotesize Frame 25&\footnotesize Frame 50\\[2pt]
\rotatebox{90}{\scriptsize Original} & \includegraphics[width=.150\linewidth]{} & \includegraphics[width=.150\linewidth]{} & \includegraphics[width=.150\linewidth]{} & \includegraphics[width=.150\linewidth]{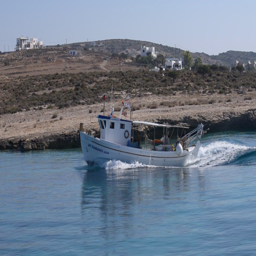} & \includegraphics[width=.150\linewidth]{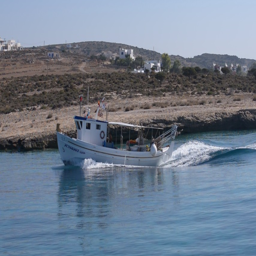} & \includegraphics[width=.150\linewidth]{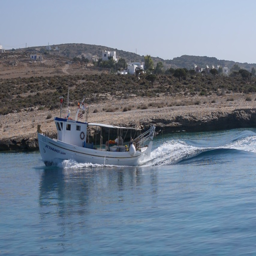}\\[-1pt]
\rotatebox{90}{\scriptsize Encrypted} & \includegraphics[width=.150\linewidth]{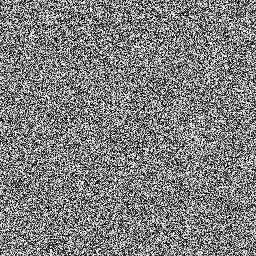} & \includegraphics[width=.150\linewidth]{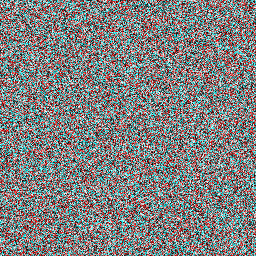} & \includegraphics[width=.150\linewidth]{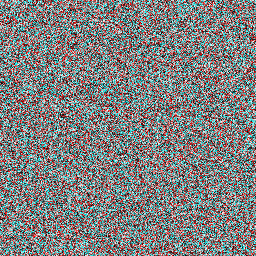} & \includegraphics[width=.150\linewidth]{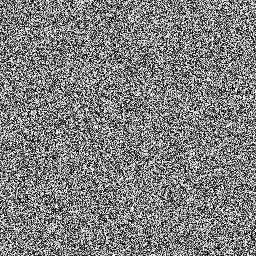} & \includegraphics[width=.150\linewidth]{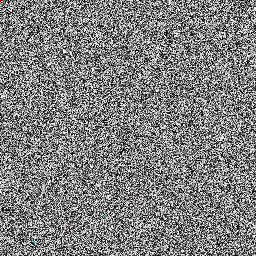} & \includegraphics[width=.150\linewidth]{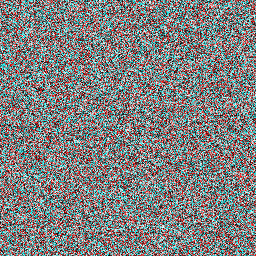}\\[-1pt]
\rotatebox{90}{\scriptsize Recovered} & \includegraphics[width=.150\linewidth]{} & \includegraphics[width=.150\linewidth]{} & \includegraphics[width=.150\linewidth]{} & \includegraphics[width=.150\linewidth]{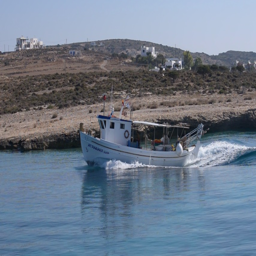} & \includegraphics[width=.150\linewidth]{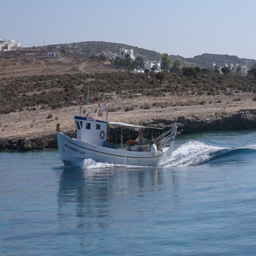} & \includegraphics[width=.150\linewidth]{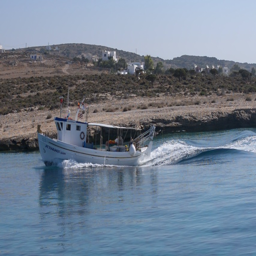}\\[-1pt]
\end{tabular}
\caption{Original videos (top), encrypted components of $\mathcal S$ (middle),
and recovered videos (bottom).}
\label{fig:app2-frames}
\end{figure}

Histograms and adjacent-pixel correlations are evaluated at frame~25.
Figure~\ref{fig:app2-histograms} compares the original RGB intensity
distributions with those of the three encrypted components of
$\mathcal S$ for both videos. For each original frame, the R, G, and B
histograms are overlaid, whereas the encrypted components are displayed
separately. The original distributions reflect the image content, while
the encrypted distributions are approximately uniform. For the adjacent-pixel correlation analysis, $3000$ horizontal
R-channel pixel pairs are sampled at identical spatial locations in the
original and encrypted images. Figure~\ref{fig:app2-statistical-panels}
shows that the encrypted pixel pairs are widely dispersed. Their
correlation coefficients are $-0.015782$ and $-0.015349$ for
\textit{kite-surf} and \textit{boat}, respectively, compared with
$0.982162$ and $0.975509$ for the corresponding original frames.
Thus, the strong horizontal correlation present in the original images
is substantially reduced after encryption.

Differential sensitivity is evaluated by perturbing a single plaintext
value, $R_{111}\leftarrow \operatorname{mod}(R_{111}+0.01,1),$ and repeating the decomposition and encryption procedure. The number of pixels change rate (NPCR) measures the percentage of changed ciphertext
values, whereas the unified average changing intensity (UACI) measures
the mean absolute ciphertext difference, normalized by $255$ and
expressed as a percentage~\cite{NCPRUACI}. Both quantities are averaged over
all frames and all three channels. As reported in
Table~\ref{tab:app2-statistics}, the NPCR and UACI values are close to
the reference values $99.6094\%$ and $33.4635\%$, respectively, for
independent uniformly distributed 8-bit data. The results show that the
single-pixel perturbation propagates throughout the ciphertext under the
tested setting.

\begin{table}[!htbp]
\centering\small
\caption{Ciphertext statistics.}
\label{tab:app2-statistics}
\begin{tabular}{lrrrr}
\toprule
Video & $r_{\rm plain}$ & $r_{\rm cipher}$ & NPCR (\%) & UACI (\%)\\
\midrule
kite-surf & 0.982162 & -0.015782 & 99.6119 & 33.4746 \\
boat & 0.975509 & -0.015349 & 99.6110 & 33.4517 \\
\bottomrule
\end{tabular}
\end{table}

\begin{figure}[H]
\centering
\setlength{\tabcolsep}{3pt}
\begin{tabular}{@{}cc@{}}
\textit{kite-surf}&\textit{boat}\\[2pt]
\includegraphics[width=.455\linewidth]{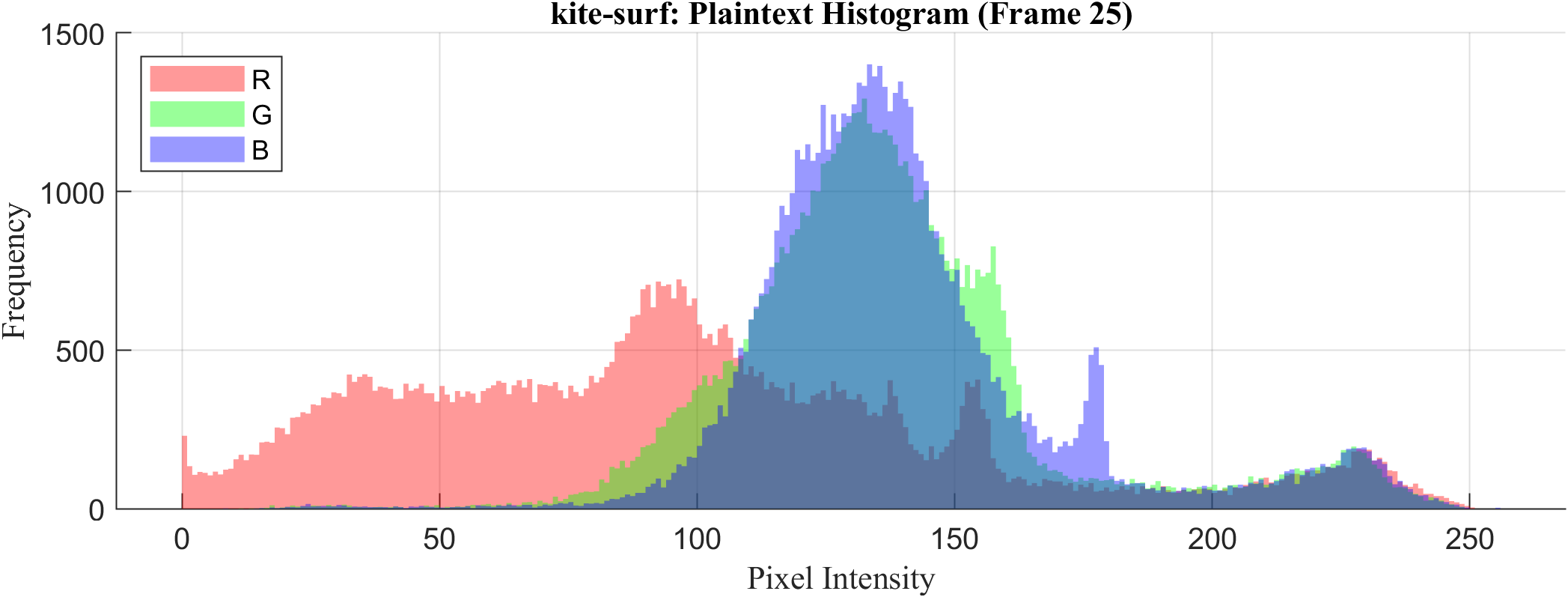} & \includegraphics[width=.455\linewidth]{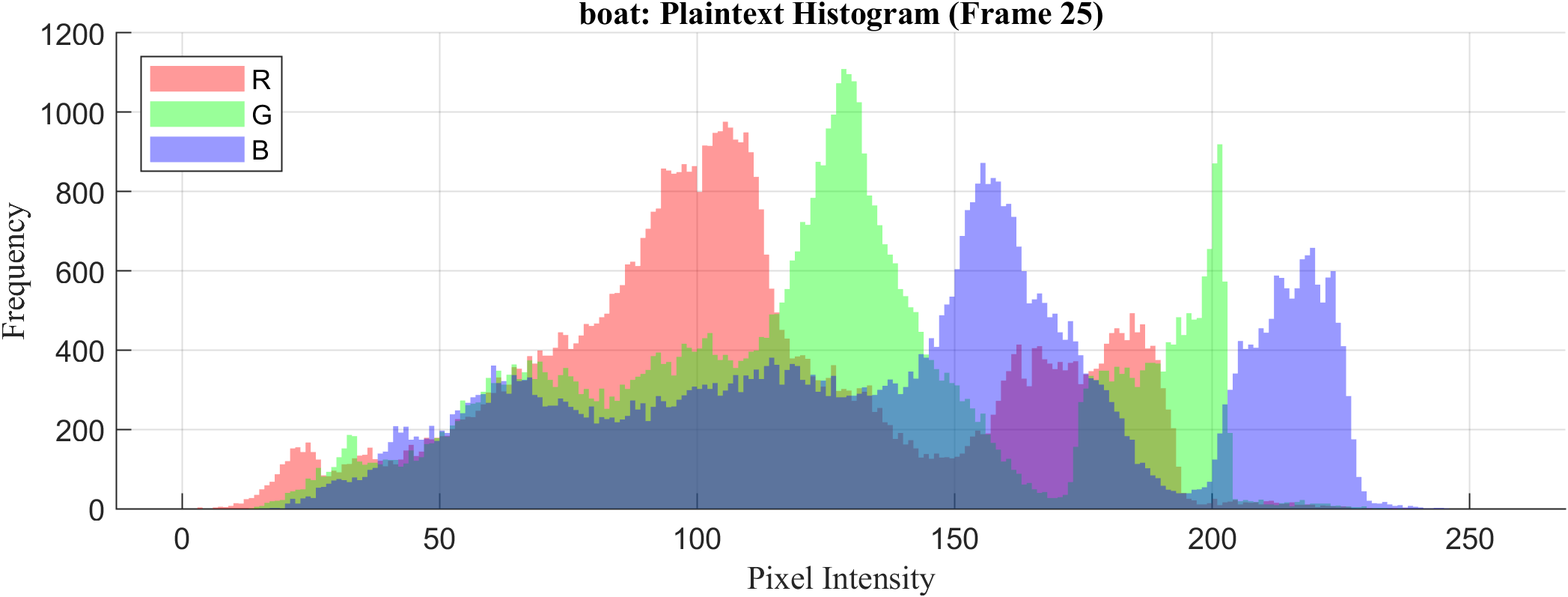}\\[2pt]
\includegraphics[width=.455\linewidth]{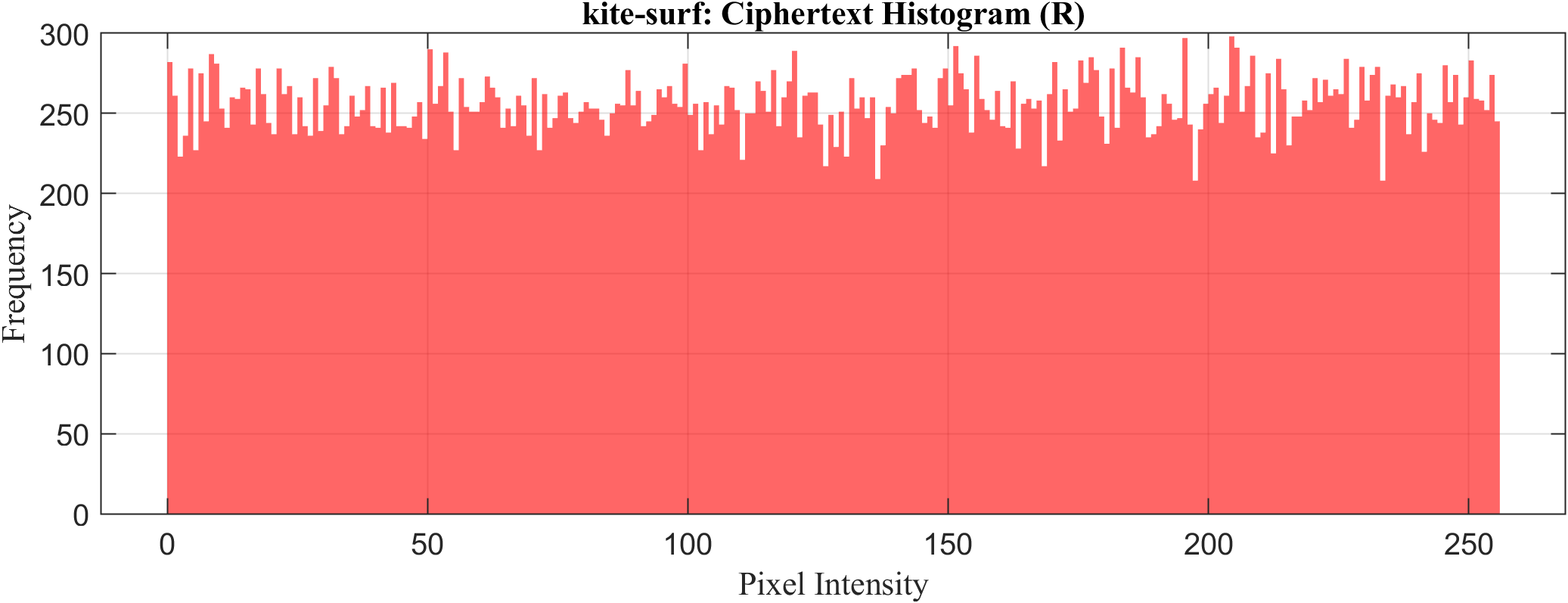} & \includegraphics[width=.455\linewidth]{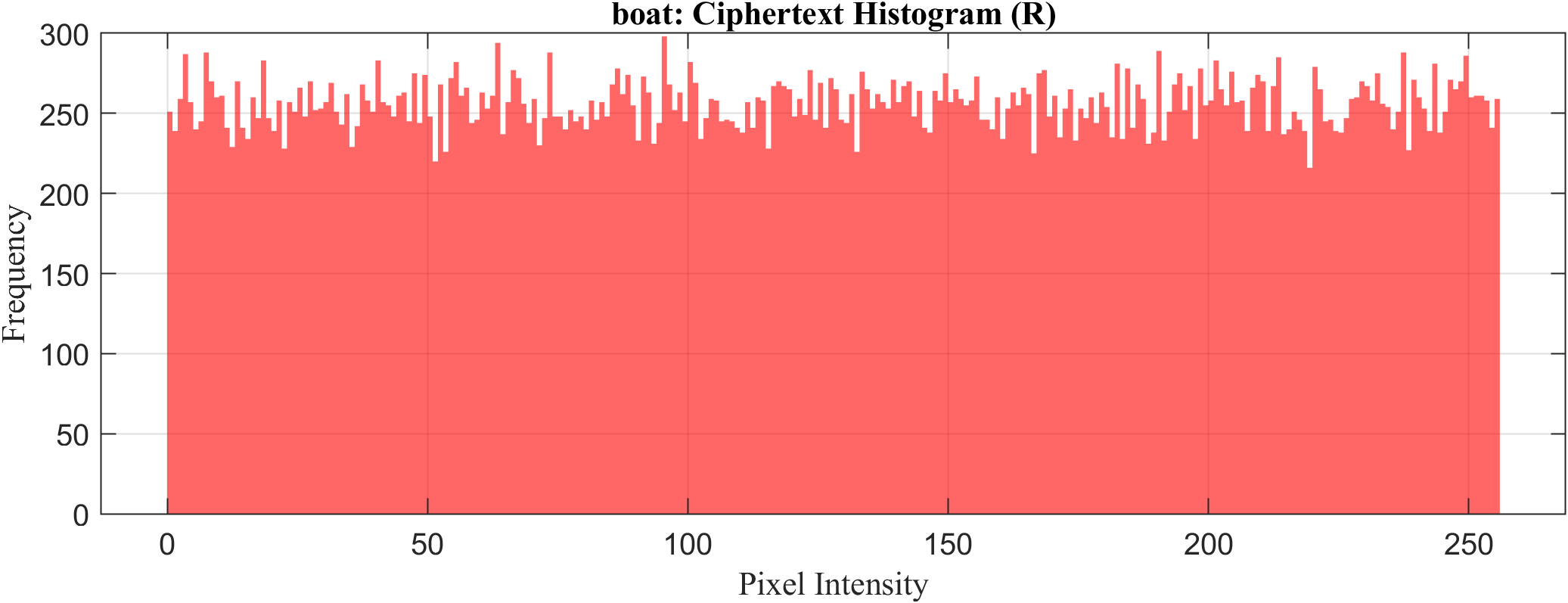}\\[2pt]
\includegraphics[width=.455\linewidth]{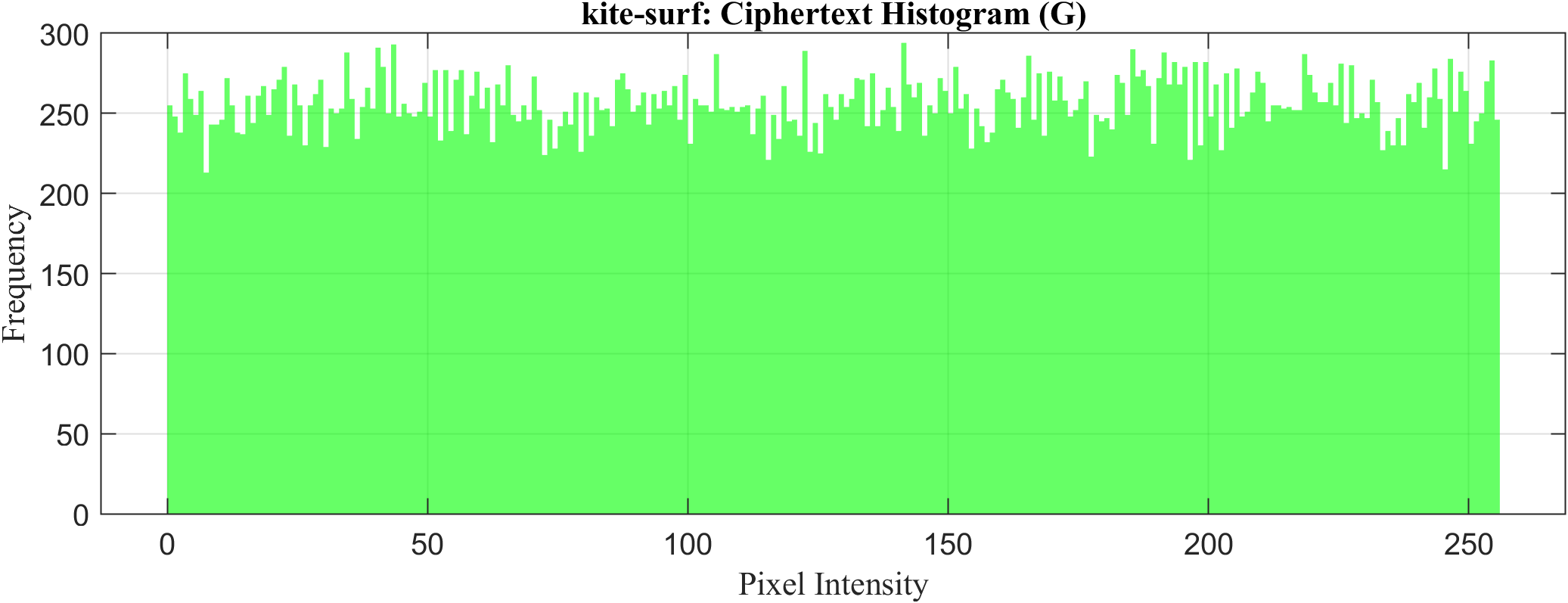} & \includegraphics[width=.455\linewidth]{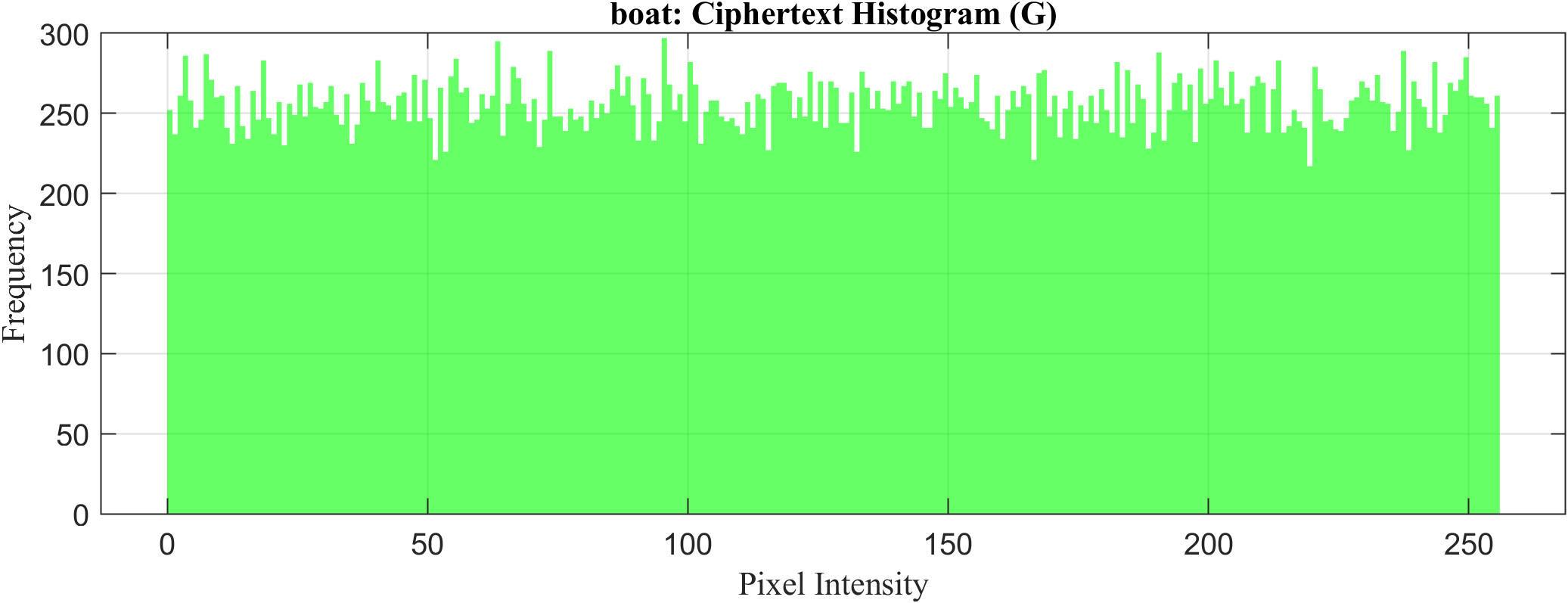}\\[2pt]
\includegraphics[width=.455\linewidth]{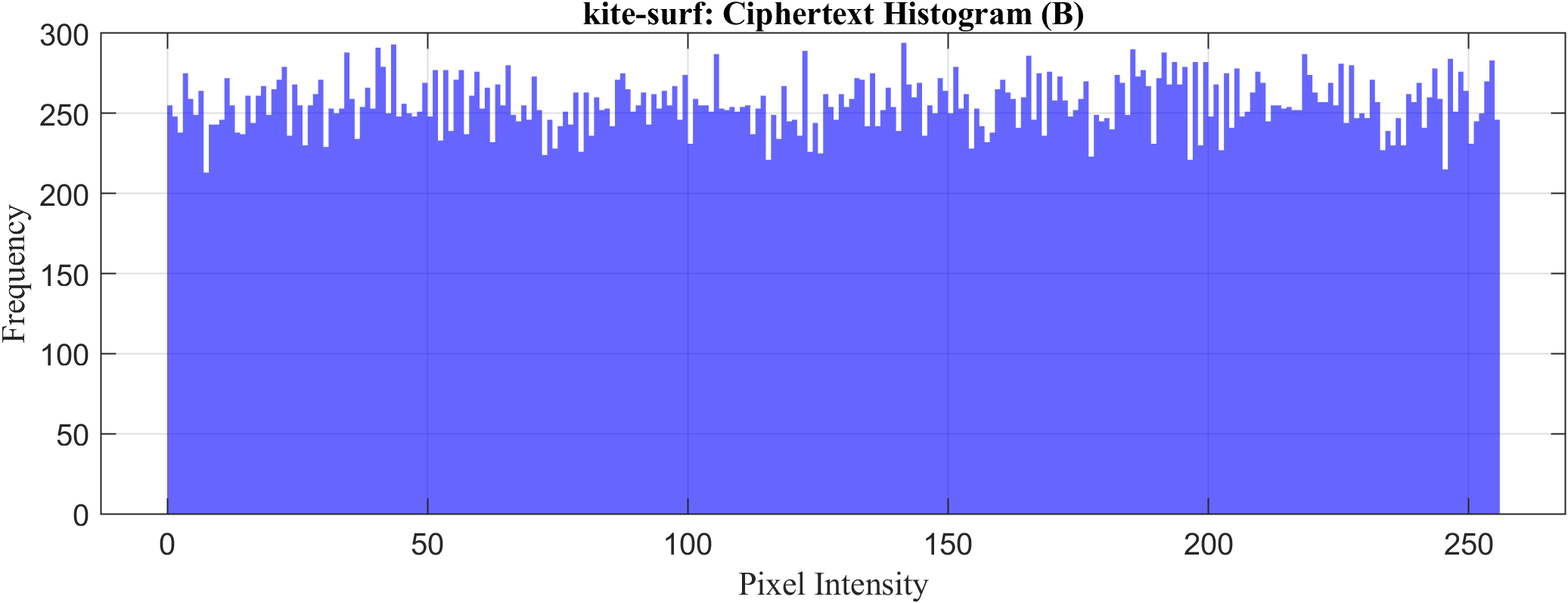} & \includegraphics[width=.455\linewidth]{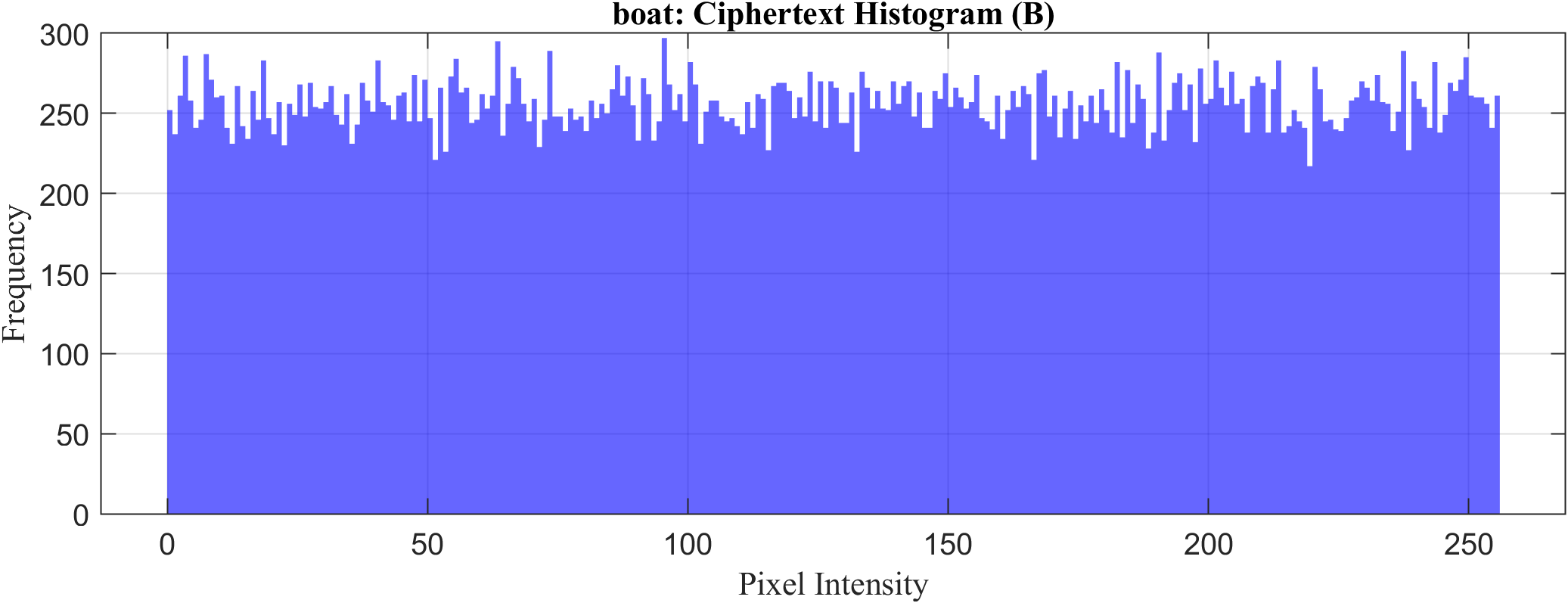}\\[2pt]
\end{tabular}
\caption{Histograms of the original and encrypted data at frame~25.}
\label{fig:app2-histograms}
\end{figure}

\begin{figure}[H]
\centering
\setlength{\tabcolsep}{2pt}
\begin{tabular}{@{}cccc@{}}
\multicolumn{2}{c}{\textit{kite-surf}}&\multicolumn{2}{c}{\textit{boat}}\\[2pt]
Original&Encrypted&Original&Encrypted\\[2pt]
\includegraphics[width=.238\linewidth]{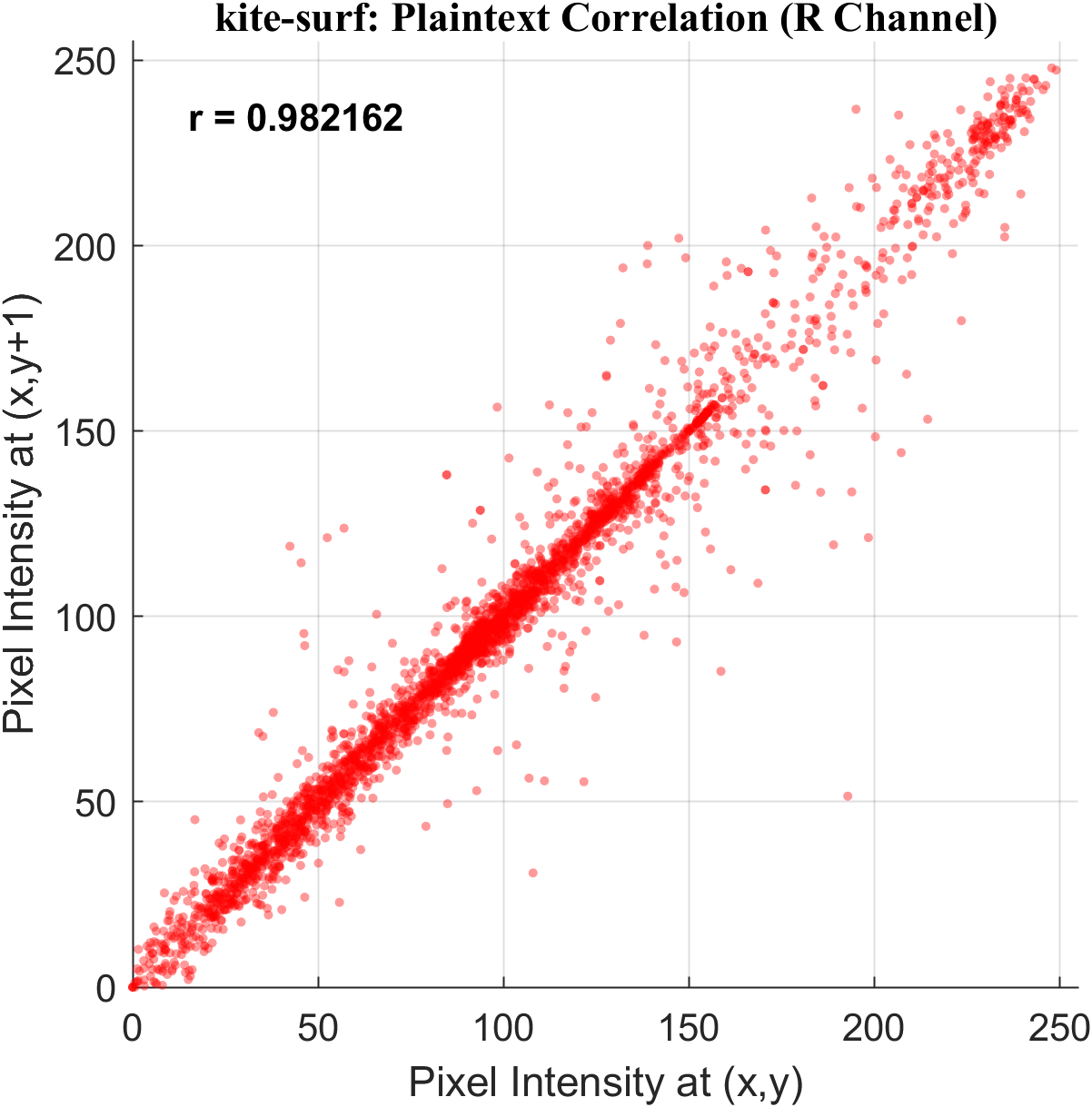} & \includegraphics[width=.238\linewidth]{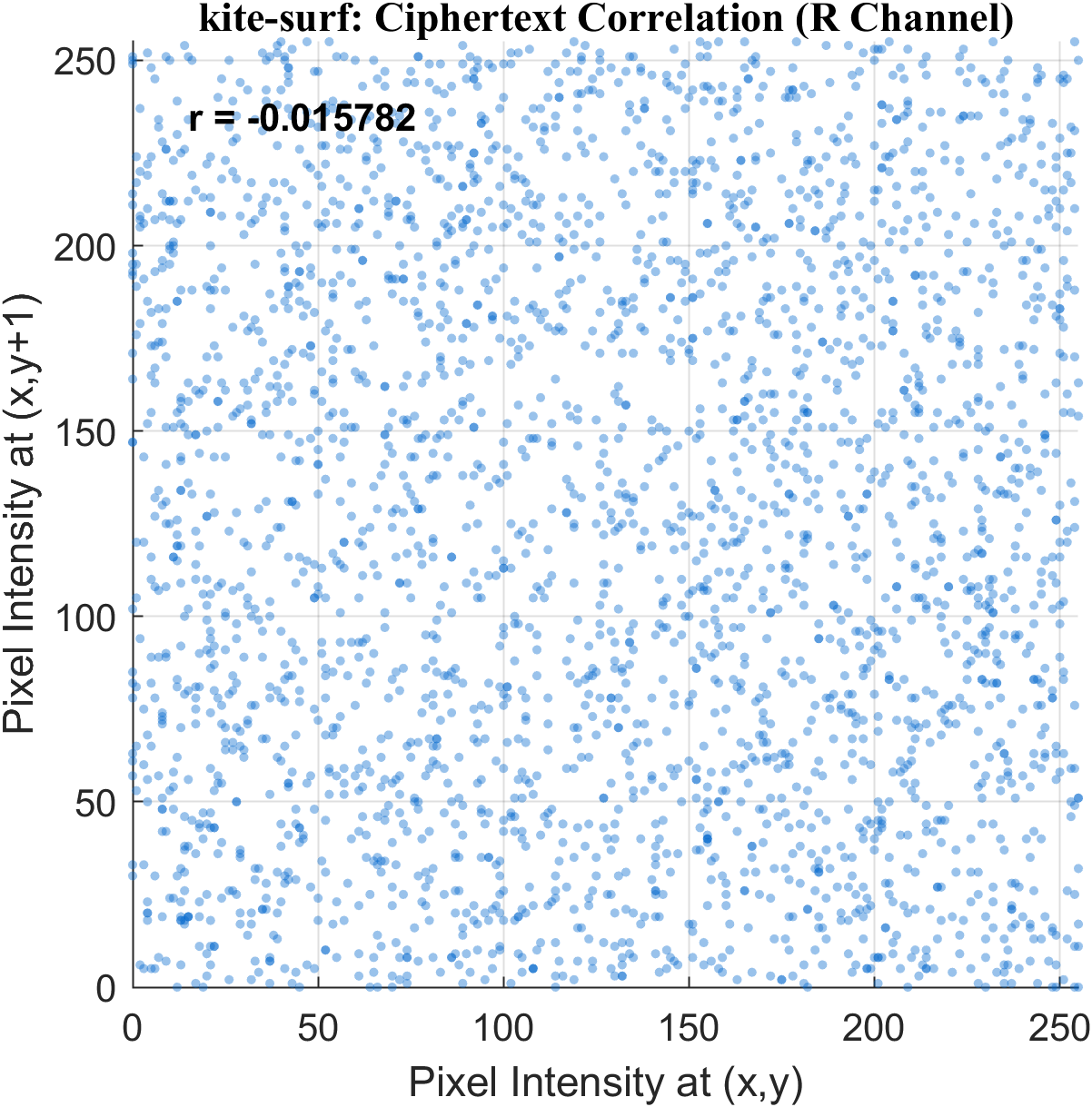} & \includegraphics[width=.238\linewidth]{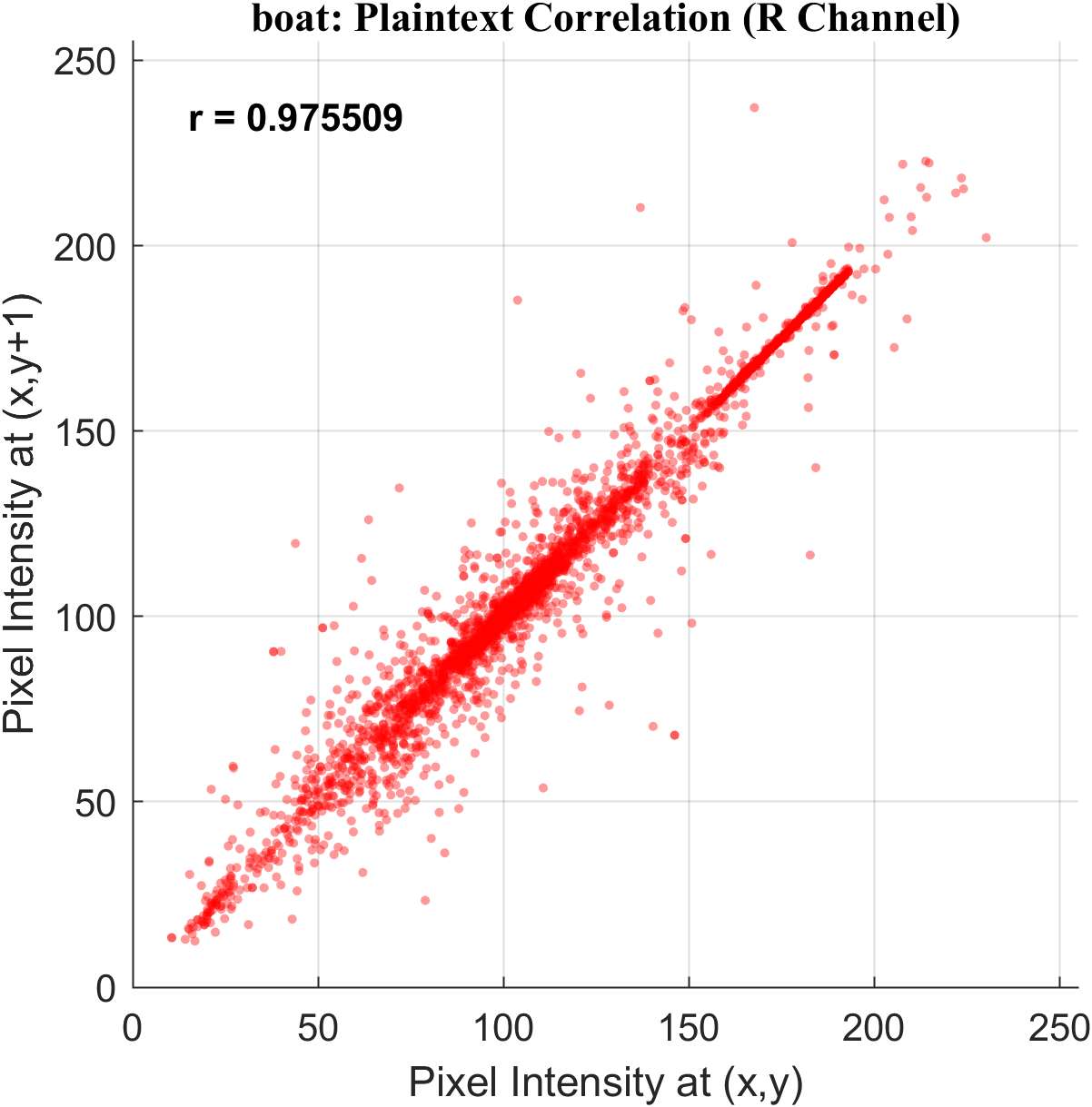} & \includegraphics[width=.238\linewidth]{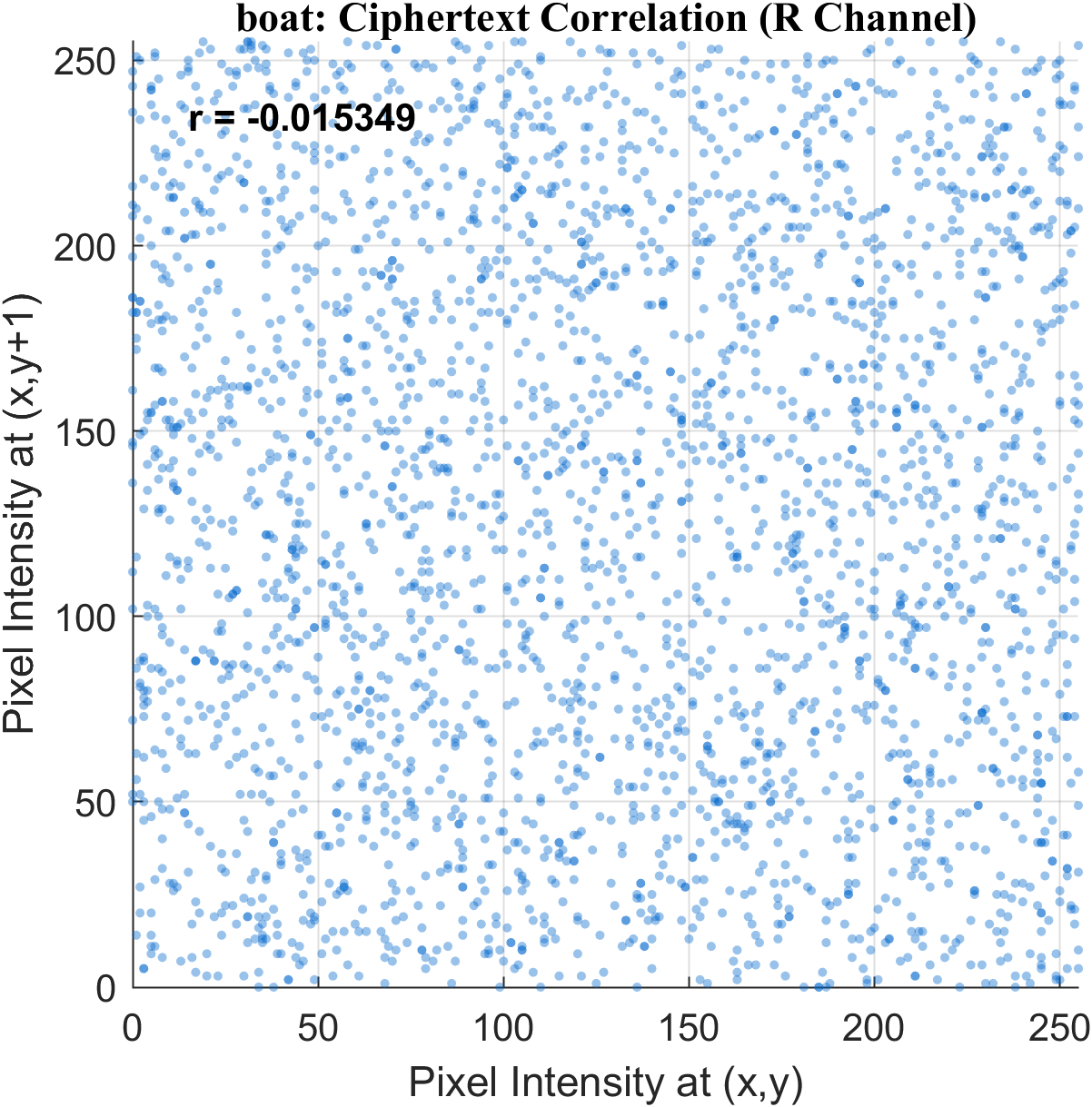}\\
\end{tabular}
\caption{Horizontal adjacent-pixel correlations at frame~25 (R Channel).}
\label{fig:app2-statistical-panels}
\end{figure}

\subsection{Stability under Input Perturbations}
We assess stability under input perturbations by separately adding
salt-and-pepper noise with density $0.01$, Gaussian noise with zero mean
and variance $0.01$, and a centered $12\times12$ zero occlusion to every
input frame before decomposition. Each perturbed video is then processed
through the same decomposition, encryption, decryption, and reconstruction
procedure.

Both the perturbed input and the recovered output are compared with the
corresponding clean video. Let $P_{\mathrm{in}}$ and $P_{\mathrm{out}}$
denote the corresponding PSNR values, and let $S_{\mathrm{in}}$ and
$S_{\mathrm{out}}$ denote the SSIM values. We further define
\[
\Delta P=P_{\mathrm{out}}-P_{\mathrm{in}}.
\]
As shown in Table~\ref{tab:app2-perturbation}, across all six cases,
$|\Delta P|$ is below $0.006$~dB and
$|S_{\mathrm{out}}-S_{\mathrm{in}}|$ is below
$5.1\times10^{-4}$. Thus, the processing pipeline introduces little
additional distortion beyond that already present in the perturbed
inputs. These experiments evaluate stability under pre-encryption input
perturbations rather than robustness to ciphertext corruption.

\begin{table}[!htbp]
\centering\small
\caption{Input and output quality under perturbations.}
\label{tab:app2-perturbation}
\setlength{\tabcolsep}{4pt}
\begin{tabular}{llrrrrr}
\toprule
Video & Perturbation & $P_{\rm in}$ & $P_{\rm out}$ & $\Delta P$ & $S_{\rm in}$ & $S_{\rm out}$\\
\midrule
kite-surf & Salt-and-pepper & 25.5216 & 25.5271 & $+0.005502$ & 0.767155 & 0.766830 \\
 & Gaussian & 20.1238 & 20.1245 & $+0.000723$ & 0.261248 & 0.261271 \\
 & Occlusion & 31.0040 & 31.0042 & $+0.000227$ & 0.995296 & 0.994959 \\
\addlinespace[3pt]
boat & Salt-and-pepper & 25.4392 & 25.4450 & $+0.005843$ & 0.782845 & 0.782335 \\
 & Gaussian & 20.0837 & 20.0828 & $-0.000875$ & 0.303207 & 0.303164 \\
 & Occlusion & 36.2688 & 36.2668 & $-0.001973$ & 0.996884 & 0.996688 \\
\bottomrule
\end{tabular}
\end{table}
\FloatBarrier

\section*{Funding}
\begin{itemize}
\item The first author was supported by the National Natural Science Foundation of China (NSFC) (No. 11901079), and China Postdoctoral Science Foundation (No. 2021M700751), and the Scientific and Technological Research Program Foundation of Jilin Province (No. JJKH20190690KJ; No. JJKH20220091KJ; No. JJKH20250851KJ).
\end{itemize}

\section*{Conflict of Interest}
The authors declare that they have no potential conflict of interest.

\section*{Data Availability}
Data sharing is not applicable to this article, as no datasets were generated or analyzed during the current study.

\section*{Acknowledgments}

The authors are grateful to Michael K. Ng of Hong Kong Baptist University for his valuable suggestions, which have helped improve the quality of this paper.

\bibliographystyle{abbrv}

\end{document}